\newtheorem{thm}{Theorem}[section]
\newtheorem*{mthm}{Main Theorem}
\newtheorem{prop}[thm]{Proposition}
\newtheorem{cor}[thm]{Corollary}
\newtheorem{lem}[thm]{Lemma}
\theoremstyle{definition}
\newtheorem{defin}[thm]{Definition}
\newtheorem*{notat}{Notation}
\newtheorem*{acknow}{Acknowledgements}
\newtheorem{rem}[thm]{Remark}
\newtheorem{exe}[thm]{Example}
\DeclareMathOperator{\dfl}{d}
\DeclareMathOperator{\im}{Im}
\DeclareMathOperator{\rHom}{RHom}
\DeclareMathOperator{\rHomu}{R\underline{Hom}}
\DeclareMathOperator{\Ker}{Ker}
\DeclareMathOperator{\rp}{R\pi}
\DeclareMathOperator{\rd}{R}
\DeclareMathOperator{\lm}{L}
\DeclareMathOperator{\li}{L\it{i}}
\DeclareMathOperator{\lp}{L\pi}
\DeclareMathOperator{\id}{id}
\DeclareMathOperator{\coh}{Coh}
\DeclareMathOperator{\Ho}{H}
\DeclareMathOperator{\rgam}{R\Gamma}
\DeclareMathOperator{\Ext}{Ext}
\DeclareMathOperator{\cl}{Cl}
\DeclareMathOperator{\supp}{Supp}
\DeclareMathOperator{\holi}{holim}
\DeclareMathOperator{\cone}{Cone}
\DeclareMathOperator{\bl}{Bl}
\DeclareMathOperator{\tot}{Tot}
\DeclareMathOperator{\Der}{D}
\DeclareMathOperator{\rga}{R\Gamma}
\renewcommand{\ker}{\Ker}
\renewcommand{\tilde}{\widetilde}
\newcommand{\qc}{\mathrm{qc}}
\newcommand{\bd}{\mathrm{b}}
\newcommand{\co}{\mathrm{coh}}
\newcommand{\rpi}{\rp _*}
\newcommand{\rip}{\rd ^i \pi _*}
\newcommand{\rkp}{\rd ^{i - (k - 2)} \pi _*}
\newcommand{\ripm}{\rd ^{i - 1} \pi _*}
\newcommand{\lpi}{\lp ^*}
\newcommand{\hoi}{\Ho ^i}
\newcommand{\holim}{\mathop{\holi}}
\renewcommand{\subseteq}{\subset}
\title{Semiorthogonal decompositions of projective varieties with isolated rational singularities}
\author{Yuto Arai}
\date{}
\begin{document}
 \maketitle
 \begin{abstract}
  We develop the method of inducing semiorthogonal decompositions of projective varieties with isolated rational singularities from those of small resolutions of singularities, which generalizes semiorthogonal decompositions for singular surfaces by Karmazyu-Kuznetsov-Shinder. 
  \par We first explain the classical generator of the null category, which is a kind of triangulated full subcategory, and prove that the orthogonal decompositions of the null category is induced through its generator. Next, we prove that the candidate of induced semiorthogonal decomposition behaves well with respect to the inverse image of resolution morphism, and as a corollary, we obtain the required semiorthogonal decomposition.
 \end{abstract}
 \tableofcontents
 \section{Introduction}
 In this paper, we study about semiorthogonal decompositions of the derived categories of projective varieties with isolated rational singularities. It is important to investigate semiorthogonal decompositions in understanding derived categories. There are many methods of creating semiorthogonal decompositions of smooth varieties. For instance, we can induce the semiorthogonal decompositions from the semiorthogonal decompositions of derived categories of projective spaces $\mathbb{P} ^n$, which is well known by Beilinson theorem, by using blowing-up formula and projective bundle formula (the latter formula is generalized to fibrations with smooth proper fibres by \cite[Theorem 2.8]{MR2802576}). However, there are few methods of creating semiorthogonal decompositions of singular varieties. In \cite{MR4276320,MR4382477}, Karmazyn, Kuznetsov, and Shinder study the methods of inducing semiorthogonal decompositions of surfaces with isolated rational singularities from a kind of semiorthogonal decompositions of resolutions of singularities:
 \begin{thm}[{\cite{MR4382477}}]
  Let $X$ be a surface with rational singularities, and $\pi \colon Y \to X$ be a resolution. Let \[
   \Der ^{\bd} (Y) = \langle \tilde{\mathcal{A}}_1,\ldots,\tilde{\mathcal{A}}_n \rangle
  \] be a semiorthogonal decomposition of $\Der ^{\bd} (Y)$ which is compatible with $\pi$, in the sense of Definition 4.1. Then, $\mathcal{A}_i := \rpi \tilde{\mathcal{A}}_i$ for $i \in \{ 1,\ldots,r \}$ form a semiorthogonal decomposition \[
   \Der ^{\bd} (X) = \langle \mathcal{A}_1,\ldots,\mathcal{A}_n \rangle.
  \]
 \end{thm}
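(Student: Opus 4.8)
The plan is to exhibit $\rpi \colon \Der^{\bd}(Y) \to \Der^{\bd}(X)$ as a Verdier localization and to read off the decomposition of the quotient. Since $X$ has rational singularities, $\rpi \mathcal{O}_Y \cong \mathcal{O}_X$, so by the projection formula $\rpi \lpi F \cong F \otimes^{\mathbf{L}} \rpi \mathcal{O}_Y \cong F$ for every $F$; together with the fact (available from the earlier sections) that $\lpi$ sends $\Der^{\bd}(X)$ into $\Der^{\bd}(Y)$, this makes $\rpi$ essentially surjective and identifies $\Der^{\bd}(X)$ with the Verdier quotient $\Der^{\bd}(Y)/\mathcal{N}$, where $\mathcal{N}$ is the null category; write $q$ for the quotient functor. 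Under this identification $\mathcal{A}_i = \rpi \tilde{\mathcal{A}}_i$ is the essential image $q(\tilde{\mathcal{A}}_i)$, and I set $\mathcal{N}_i := \mathcal{N} \cap \tilde{\mathcal{A}}_i$. The essential input from Definition 4.1, made effective through the classical generator of $\mathcal{N}$, is that the semiorthogonal decomposition of $\Der^{\bd}(Y)$ restricts to one of the null category, $\mathcal{N} = \langle \mathcal{N}_1, \ldots, \mathcal{N}_n \rangle$; equivalently, every projection and truncation functor attached to $\langle \tilde{\mathcal{A}}_1, \ldots, \tilde{\mathcal{A}}_n \rangle$ carries $\mathcal{N}$ into itself.

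Granting this, generation is immediate: $q$ is triangulated and essentially surjective, so applying it to the canonical filtrations of $\Der^{\bd}(Y)$ equips every object of $\Der^{\bd}(X)$ with a filtration whose subquotients lie in the $\mathcal{A}_i$ (alternatively, $\rpi \lpi \mathcal{O}_X(m) \cong \mathcal{O}_X(m)$, so an ample generating family already sits inside $\langle \mathcal{A}_1, \ldots, \mathcal{A}_n \rangle$). Next I would check that each $\mathcal{A}_i$ is an admissible triangulated subcategory of $\Der^{\bd}(X)$, canonically equivalent to $\tilde{\mathcal{A}}_i / \mathcal{N}_i$. This is where the candidate decomposition's good behaviour with respect to $\lpi$ enters: the counit $\lpi \rpi \tilde A \to \tilde A$ has cone in $\mathcal{N}$, and compatibility keeps $\lpi \rpi \tilde{\mathcal{A}}_i$ inside $\langle \tilde{\mathcal{A}}_i, \ldots, \tilde{\mathcal{A}}_n \rangle$. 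Concretely, to apply the standard criterion for a full triangulated subcategory to descend to a fully faithful functor on a Verdier quotient, one must factor every map $\tilde A \to N$ with $\tilde A \in \tilde{\mathcal{A}}_i$ and $N \in \mathcal{N}$ through $\mathcal{N}_i$, and such a factorization is produced by the right adjoint of $\langle \tilde{\mathcal{A}}_i, \ldots, \tilde{\mathcal{A}}_n \rangle \hookrightarrow \Der^{\bd}(Y)$ followed by the projection onto $\tilde{\mathcal{A}}_i$, whose value on $N$ lies in $\mathcal{N} \cap \tilde{\mathcal{A}}_i = \mathcal{N}_i$ by compatibility.

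It remains to prove semiorthogonality: $\Hom_{\Der^{\bd}(X)}(\rpi \tilde A_i, \rpi \tilde A_j [k]) = 0$ whenever $i > j$. Represent such a morphism in $\Der^{\bd}(Y)/\mathcal{N}$ by a roof $\tilde A_i \xleftarrow{s} W \xrightarrow{f} \tilde A_j[k]$ with $\cone(s) \in \mathcal{N}$, and replace $W$ by its image $W'$ under the right-adjoint projection onto $\langle \tilde{\mathcal{A}}_i, \ldots, \tilde{\mathcal{A}}_n \rangle$. The cone of the comparison map $W' \to W$ is, up to shift, the value on $W$ of the complementary projection onto $\langle \tilde{\mathcal{A}}_1, \ldots, \tilde{\mathcal{A}}_{i-1} \rangle$; that projection annihilates $\tilde A_i$ and preserves $\mathcal{N}$, so on $W$ it returns an object of $\mathcal{N}$, whence $W' \to \tilde A_i$ still has cone in $\mathcal{N}$ and the roof is represented by $\tilde A_i \xleftarrow{} W' \xrightarrow{} \tilde A_j[k]$ with $W' \in \langle \tilde{\mathcal{A}}_i, \ldots, \tilde{\mathcal{A}}_n \rangle$. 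Since $j < i$, semiorthogonality in $\Der^{\bd}(Y)$ gives $\Hom(\langle \tilde{\mathcal{A}}_i, \ldots, \tilde{\mathcal{A}}_n \rangle, \tilde A_j[k]) = 0$, so the right leg of the refined roof vanishes and the morphism is zero. Combining this with the previous paragraph yields $\Der^{\bd}(X) = \langle \mathcal{A}_1, \ldots, \mathcal{A}_n \rangle$.

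I expect the main obstacle to lie not in these categorical manipulations but at the interface with the hypothesis. First, one has to pass from Definition 4.1 to the usable form $\mathcal{N} = \langle \mathcal{N}_1, \ldots, \mathcal{N}_n \rangle$ — that is, to understand the null category well enough, via its classical generator, to see exactly how the $\tilde{\mathcal{A}}_i$ partition it; this is the heart of the matter, and the reason the paper devotes a section to generators of the null category and to the statement that its orthogonal decompositions are induced through the generator. Second, one needs the input — special to resolutions of varieties with rational singularities — that $\lpi$ preserves $\Der^{\bd}_{\co}$, so that $\rpi$ is a Verdier quotient onto all of $\Der^{\bd}(X)$ rather than onto a proper subcategory. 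A subsidiary point is to confirm genuine admissibility of the pieces $\mathcal{A}_i$, not merely the existence of filtrations; this is exactly what the fully faithful embedding $\tilde{\mathcal{A}}_i/\mathcal{N}_i \hookrightarrow \Der^{\bd}(X)$ above provides.
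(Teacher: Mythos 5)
Your architecture rests on the identification $\Der ^{\bd}(X) \simeq \Der ^{\bd}(Y)/\ker ^{\bd}\rpi$, and that is where the proposal has a genuine gap. The justification you offer — that $\lpi$ sends $\Der ^{\bd}(X)$ into $\Der ^{\bd}(Y)$, so that $\rpi$ is a Bousfield localization at the bounded level — is false for singular $X$: for $\mathcal{F}\in \Der ^{\bd}(X)$ the complex $\lpi \mathcal{F}$ is in general only bounded above, which is exactly why the paper never forms the quotient and instead works with the truncation $\tau ^{\geq k_- - 1}\lpi$. Lemma 2.3 shows $\rpi \tau ^{\geq k_- -1}\lpi \mathcal{F}\simeq \mathcal{F}$, which gives essential surjectivity (your Verdier-quotient functor would at best be essentially surjective too), but full faithfulness of $\Der ^{\bd}(Y)/\ker ^{\bd}\rpi \to \Der ^{\bd}(X)$ — equivalently, that every morphism $\rpi \mathcal{F}\to \rpi \mathcal{G}$ is represented by a roof in $\Der ^{\bd}(Y)$ with cone of the left leg in $\ker ^{\bd}\rpi$ — is a nontrivial theorem that your roof manipulations presuppose rather than prove. (Your parenthetical fallback for generation, that the $\mathcal{O}_X(m)$ generate, also fails: finitely many cones on line bundles only produce perfect complexes, and $\perf (X)\subsetneq \Der ^{\bd}(X)$ for singular $X$.) Your admissibility paragraph has a similar soft spot: Verdier's factorization criterion asks you to factor maps between $\tilde{\mathcal{A}}_i$ and $\mathcal{N}$ through $\mathcal{N}\cap \tilde{\mathcal{A}}_i$, but the orthogonal decomposition $N\simeq \bigoplus N_j$ only kills the components on one side of $i$, so the factorization is not produced by the recipe you describe.

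The paper's route avoids all of this. Semiorthogonality is proved by adjunction, $\rHom _X(\rpi \mathcal{F},\rpi \mathcal{G})\simeq \rHom _Y(\lpi \rpi \mathcal{F},\mathcal{G})$, splitting $\lpi \rpi \mathcal{F}$ into $\tau ^{\geq k_- -1}$ and $\tau ^{\leq k_- -2}$ pieces: Lemma 5.1 (which uses the completely orthogonal decomposition of the null category from Corollary 4.2 and Lemma 4.3, together with a homotopy-limit argument) places $\tau ^{\geq l}\lpi \rpi \mathcal{F}$ and hence each $\Ho ^l \lpi \rpi \mathcal{F}$ in $\langle \tilde{\mathcal{A}}_i,\tilde{\mathcal{A}}_{i+1}\cap \ker ^{\bd}\rpi ,\ldots \rangle$, and a spectral sequence then kills $\rHom$ out of the unbounded-below tail. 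Generation comes from pushing forward the decomposition triangles of $\tau ^{\geq k_- -1}\lpi \mathcal{F}$. If you want to salvage your approach, the missing ingredient is precisely a proof that $\rpi$ realizes $\Der ^{\bd}(X)$ as the Verdier quotient by the null category — true in this setting but requiring its own argument — after which your roof computation for semiorthogonality does go through given Lemma 4.3.
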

 We prove the similar theorem for arbitrary-dimensional projective varieties:
 \begin{mthm}[Theorem 5.3]
  Let $X$ be a projective variety with isolated rational singularities, and $\pi \colon Y \to X$ be a resolution such that the dimension of fibres of $\pi$ are at most $1$. Let \[
   \Der ^{\bd} (Y) = \langle \tilde{\mathcal{A}}_1,\ldots,\tilde{\mathcal{A}}_n \rangle
  \] be a semiorthogonal decomposition of $\Der ^{\bd} (Y)$ which is compatible with $\pi$, in the sense of Definition 4.1. Then, $\mathcal{A}_i := \rpi \tilde{\mathcal{A}}_i$ for $i \in \{ 1,\ldots,r \}$ form a semiorthogonal decomposition \[
   \Der ^{\bd} (X) = \langle \mathcal{A}_1,\ldots,\mathcal{A}_n \rangle.
  \]
 \end{mthm}
 The proof of main theorem is similar to the proof of Theorem 1.1, but we construct a new example of semiorthogonal decomposition in the three dimensional case by using main theorem in Example 5.4.
 \begin{notat}
  We work over the complex number field $\mathbb{C}$, and a variety means that separated, integral scheme of finite type over $\mathbb{C}$.
  \par For each variety $X$, we denote by $\coh (X)$ the abelian category of coherent sheaves on $X$, by $\Der _{\qc} (X)$ the derived category of quasi-coherent sheaves on $X$, by $\Der ^{\bd} (X)$ the subcategory of $\Der _{\qc} (X)$ consisting of bounded complexes of coherent sheaves, and by $\Der ^- (X)$ the subcategory of $\Der _{\qc} (X)$ consisting of complexes bounded above of coherent sheaves.
 \end{notat}
 \begin{acknow}
  I would like to sincerely thank my advisor Yukinobu Toda for many helpful suggestions and useful comments. I would like to thank Yuta Hatasa for improving my expressions and Tasuki Kinjo and Kengo Maehara for gentle encouragement. Finally, I would like to thank my parents for their support troughout my study.
 \end{acknow}
 \section{Birational geometry}
 \begin{defin}[rational singularity]
  A normal variety $X$ has rational singularities if for any resolution $\pi \colon Y \to X$, $\rpi \mathcal{O}_Y \simeq \mathcal{O}_X$.
 \end{defin}
 \begin{cor}
  Let $X$ be a variety with rational singularities. Then, ${\rpi} \circ \lpi \simeq \id$.
 \end{cor}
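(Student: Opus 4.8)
\textit{Proof proposal.} The plan is to reduce the statement to the projection formula together with the defining property of rational singularities. A resolution $\pi \colon Y \to X$ is a proper birational morphism, hence in particular quasi-compact and separated, so the projection formula is available: for every $F \in \Der_{\qc}(X)$ there is an isomorphism
\[
 \rpi\bigl(\mathcal{O}_Y \otimes^{L} \lpi F\bigr) \;\simeq\; \rpi \mathcal{O}_Y \otimes^{L} F,
\]
functorial in $F$. The left-hand side is just $\rpi \lpi F$, so, combining this with Definition 2.1, which gives $\rpi \mathcal{O}_Y \simeq \mathcal{O}_X$, we obtain a chain of functorial isomorphisms
\[
 \rpi \lpi F \;\simeq\; \rpi \mathcal{O}_Y \otimes^{L} F \;\simeq\; \mathcal{O}_X \otimes^{L} F \;\simeq\; F .
\]
Since each step is natural in $F$, this yields $\rpi \circ \lpi \simeq \id$ as endofunctors.

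Two small points deserve attention. First, one should check that the composite isomorphism above agrees with the unit $\eta \colon \id \to \rpi \circ \lpi$ of the adjunction $(\lpi, \rpi)$; this follows from the standard compatibility of the projection-formula isomorphism with the adjunction unit, together with the fact that the isomorphism $\rpi \mathcal{O}_Y \simeq \mathcal{O}_X$ of Definition 2.1 is the one induced by the natural map $\mathcal{O}_X \to \rpi \mathcal{O}_Y$. Second, one should make sure the functors are considered on a category where everything makes sense: on $\Der_{\qc}(X)$ both $\lpi$ and $\rpi$ are defined without restriction, and $\rpi$ preserves boundedness of coherent complexes because $\pi$ is proper, so the statement descends to $\Der^{\bd}(X)$ once one knows $\lpi$ lands in $\Der^{\bd}(Y)$ (using, e.g., that $Y$ is smooth).

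The argument has no genuinely hard step; the only thing one must not overlook is that the collapse $\rpi \mathcal{O}_Y \otimes^{L} F \simeq \mathcal{O}_X \otimes^{L} F$ uses the full strength of $\rpi \mathcal{O}_Y \simeq \mathcal{O}_X$ in the derived category—that is, $\rip \mathcal{O}_Y = 0$ for $i > 0$, not merely $\pi_* \mathcal{O}_Y = \mathcal{O}_X$—which is exactly what the hypothesis of rational singularities supplies.
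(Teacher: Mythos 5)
Your proposal is correct and is exactly the argument the paper intends: its entire proof is the phrase ``Use the projection formula,'' and you have simply written out that one-line argument in full, including the identification $\rpi\lpi F \simeq \rpi\mathcal{O}_Y \otimes^{L} F \simeq F$ via the rational-singularities hypothesis. No discrepancy with the paper's approach.
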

 \begin{proof}
  Use the projection formula.
 \end{proof}
 \begin{lem}[{\cite[Lemma 2.2]{MR4382477}}, {\cite[Lemma 2.4]{MR4276320}}]
  Let $\pi \colon Y \to X$ be a proper morphism with ﬁbres of dimension at most $1$, and $k$ be an integer. Let $\mathcal{F} \in \Der ^- (Y)$ be a complex on $Y$ bounded above which satisfies that \[
   \rip \mathcal{F} \simeq 0
  \] for all $i < k$. Then,
  \begin{eqnarray*}
   \rpi \tau ^{\leq k - 2} \mathcal{F} & \simeq & 0, \\
   \rpi \tau ^{\geq k - 1} \mathcal{F} & \simeq & \rpi \mathcal{F}.
  \end{eqnarray*}
  Especially, if $\pi$ additionally satisfies that $\rpi \mathcal{O}_Y \simeq \mathcal{O}_X$, for any $k_- \leq k_+$ and $\mathcal{G} \in \Der ^{[k_-,k_+]} (X)$,
  \begin{eqnarray*}
   \rpi \tau ^{\leq k_- - 2} \lpi \mathcal{G} & \simeq & 0, \\
   \rpi \tau ^{\geq k_- - 1} \lpi \mathcal{G} & \simeq & \mathcal{G}.
  \end{eqnarray*}
 \end{lem}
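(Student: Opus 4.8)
The plan is to reduce everything to the cohomological amplitude of $\rpi$ together with one long exact sequence. The key input is that $\rpi$ has cohomological amplitude contained in $[0,1]$: the lower bound is just left-exactness of $\pi_*$, and $\rd^n\pi_*(-) = 0$ for $n \geq 2$ because every fibre of $\pi$ has dimension at most $1$ (Grothendieck vanishing along the fibres). By the standard way-out lemma for a functor of finite cohomological amplitude, this gives $\rpi(\Der^{\leq m}(Y)) \subset \Der^{\leq m+1}(X)$ and $\rpi(\Der^{\geq m}(Y)) \subset \Der^{\geq m}(X)$ for every integer $m$, where $\Der^{\leq m}$ and $\Der^{\geq m}$ denote the subcategories of complexes whose cohomology sheaves are concentrated in the indicated degrees.

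Next I would apply $\rpi$ to the truncation triangle $\tau^{\leq k-2}\mathcal{F} \to \mathcal{F} \to \tau^{\geq k-1}\mathcal{F} \xrightarrow{+1}$. Since in a distinguished triangle $A \to B \to C \xrightarrow{+1}$ one has $A \simeq 0$ if and only if $B \to C$ is an isomorphism, the two displayed conclusions are equivalent, and it suffices to prove $\rpi\tau^{\leq k-2}\mathcal{F} \simeq 0$. For this I would examine the long exact sequence of cohomology sheaves
\[ \cdots \to \rd^{n-1}\pi_*\tau^{\geq k-1}\mathcal{F} \to \rd^n\pi_*\tau^{\leq k-2}\mathcal{F} \to \rd^n\pi_*\mathcal{F} \to \rd^n\pi_*\tau^{\geq k-1}\mathcal{F} \to \cdots. \]
For $n \geq k$ the term $\rd^n\pi_*\tau^{\leq k-2}\mathcal{F}$ vanishes directly: $\tau^{\leq k-2}\mathcal{F} \in \Der^{\leq k-2}(Y)$, so $\rpi\tau^{\leq k-2}\mathcal{F} \in \Der^{\leq k-1}(X)$. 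For $n \leq k-1$ the two outer terms vanish: $\rd^n\pi_*\mathcal{F} = 0$ is the hypothesis (as $n < k$), and $\rd^{n-1}\pi_*\tau^{\geq k-1}\mathcal{F} = 0$ since $\tau^{\geq k-1}\mathcal{F} \in \Der^{\geq k-1}(Y)$ forces $\rpi\tau^{\geq k-1}\mathcal{F} \in \Der^{\geq k-1}(X)$ while $n-1 < k-1$. Hence $\rd^n\pi_*\tau^{\leq k-2}\mathcal{F} = 0$ for all $n$, that is $\rpi\tau^{\leq k-2}\mathcal{F} \simeq 0$, and then the triangle gives $\rpi\tau^{\geq k-1}\mathcal{F} \simeq \rpi\mathcal{F}$.

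For the ``especially'' part I would apply the first part with $\mathcal{F} := \lpi\mathcal{G}$ and $k := k_-$. Since $\pi^*$ is right exact, $\lpi\mathcal{G}$ is a bounded-above complex with cohomology in degrees $\leq k_+$, hence a legitimate object of $\Der^-(Y)$. The projection formula gives $\rpi\lpi\mathcal{G} \simeq \rpi\mathcal{O}_Y \otimes^{\lm}\mathcal{G} \simeq \mathcal{G}$ (this uses only $\rpi\mathcal{O}_Y \simeq \mathcal{O}_X$, exactly as in Corollary 2.2); in particular $\rd^i\pi_*\lpi\mathcal{G} = \mathcal{H}^i(\mathcal{G}) = 0$ for $i < k_-$, which is the hypothesis of the first part. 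It therefore yields $\rpi\tau^{\leq k_- - 2}\lpi\mathcal{G} \simeq 0$ and $\rpi\tau^{\geq k_- - 1}\lpi\mathcal{G} \simeq \rpi\lpi\mathcal{G} \simeq \mathcal{G}$, as required.

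I do not expect a serious obstacle. The only genuine content is the fibre-dimension bound $\rd^n\pi_* = 0$ for $n \geq 2$, after which everything is bookkeeping of the three indices $k$, $k-1$, $k-2$ against the two-step amplitude $[0,1]$ of $\rpi$. The one point deserving a little care is applying the way-out lemma to $\tau^{\leq k-2}\mathcal{F}$, which is bounded above but possibly unbounded below; this is legitimate precisely because the cohomological amplitude of $\rpi$ is finite, so the relevant truncation arguments still terminate.
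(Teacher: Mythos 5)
Your proof is correct and follows essentially the same route as the paper's: both reduce everything to showing $\rpi \tau^{\leq k-2}\mathcal{F} \simeq 0$ degree by degree, using the fibre-dimension bound on the cohomological amplitude of $\rpi$ for degrees $\geq k$ and the long exact sequence of the truncation triangle together with the vanishing hypothesis for degrees $\leq k-1$, and then deducing the ``especially'' part from $\rpi \lpi \simeq \id$. The only difference is cosmetic: where you invoke the standard way-out lemma to justify the amplitude bound on the possibly unbounded-below complex $\tau^{\leq k-2}\mathcal{F}$, the paper verifies it directly with a termwise-injective K-injective resolution.
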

 \begin{proof}
  Consider a distinguished triangle \[
   \xymatrix@C=10pt{
    \rpi \tau ^{\leq k - 2} \mathcal{F} \ar[r] & \rpi \mathcal{F} \ar[r] & \rpi \tau ^{\geq k - 1} \mathcal{F} \ar@{.>}[r] &.
    }
  \] Now, there exists a complex $\mathcal{I} \in \Der _{\qc} (Y)$ such that it is homotopically injective and termwise injective, and $\mathcal{I} \simeq \tau ^{\leq k - 2} \mathcal{F}$. Denote by $\dfl _{\mathcal{I}} ^{\cdot}$ the differential of $\mathcal{I}$. Then, the exact sequence \[
   \xymatrix@C=10pt{
    0 \ar[r] & \im \dfl _{\mathcal{I}} ^{k-3} \ar[r] & \mathcal{I}^{k-2} \ar[r] & \mathcal{I}^{k-1} \ar[r] & \mathcal{I}^{k} \ar[r] & \cdots
   }
  \] means that
  \begin{eqnarray*}
   \rip \tau ^{\leq k - 2} \mathcal{F} & \simeq & \hoi \pi _* \mathcal{I} \\
   & \simeq & \rkp \im \dfl _{\mathcal{I}} ^{k-3} \\
   & \simeq & 0
  \end{eqnarray*}
  for all $i > k - 2 + 1 = k - 1$, where the last equality holds since the dimension of fibres of $\pi$ are at most $1$.
  \par On the other hand, for any $i \leq k - 1$, by considering the cohomology long exact sequence of the distinguished triangle \[
   \xymatrix@C=10pt{
    \rpi \tau ^{\leq k - 2} \mathcal{F} \ar[r] & \rpi \mathcal{F} \ar[r] & \rpi \tau ^{\geq k - 1} \mathcal{F} \ar@{.>}[r] &,
   }
  \] we obtain that \[
   \rip \tau ^{\leq k - 2} \mathcal{F} \simeq 0
  \] since $\ripm \tau ^{\geq k - 1} \mathcal{F} \simeq \rip \mathcal{F} \simeq 0$.
  \par Thus, $\rpi \tau ^{\leq k - 2} \mathcal{F} \simeq 0$, and also $\rpi \tau ^{\geq k_- - 1} \mathcal{F} \simeq \rpi \mathcal{F}$.
 \end{proof}
 \begin{rem}
  \cite{MR4382477,MR4276320} give a different proof for the previous lemma used spectral sequences. We give more direct proof.
 \end{rem}
 \begin{cor}[{\cite[Corollary 2.3]{MR4382477}}, {\cite[Corollary 2.5]{MR4276320}}]
  Under the assumptions of Lemma 2.5, the push-forward functor \[
   \rpi \colon \Der ^{\bd} (Y) \to \Der ^{\bd} (X)
  \] is essentially surjective.
 \end{cor}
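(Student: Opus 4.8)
The plan is to exhibit an explicit object of $\Der^{\bd}(Y)$ pushing forward to a given $\mathcal{G} \in \Der^{\bd}(X)$, using exactly the smart truncation of $\lpi \mathcal{G}$ that already appears in Lemma 2.5. First I would choose integers $k_- \leq k_+$ with $\mathcal{G} \in \Der^{[k_-,k_+]}(X)$ and set $\mathcal{F} := \tau^{\geq k_- - 1} \lpi \mathcal{G}$. The hypotheses in force are those of the ``especially'' clause of Lemma 2.5 (in particular $\rpi \mathcal{O}_Y \simeq \mathcal{O}_X$), so that lemma applies verbatim and yields
\[
 \rpi \mathcal{F} = \rpi \tau^{\geq k_- - 1} \lpi \mathcal{G} \simeq \mathcal{G}.
\]
Thus $\mathcal{G}$ lies in the essential image of $\rpi$ as soon as we know $\mathcal{F} \in \Der^{\bd}(Y)$, and since $\mathcal{G}$ is arbitrary this gives essential surjectivity.

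The only point that requires a word is the membership $\mathcal{F} \in \Der^{\bd}(Y)$, i.e. boundedness together with coherence of the cohomology sheaves. I would argue as follows. The complex $\lpi \mathcal{G}$ has coherent cohomology: via the stupid/smart truncation triangles one reduces to the case where $\mathcal{G}$ is a single coherent sheaf, and then each $L_i\pi^*\mathcal{G}$ is coherent because $\pi$ is a morphism of noetherian schemes (locally $L_i\pi^*$ is a $\mathrm{Tor}$ of finitely generated modules). Moreover, since $\pi^*$ is right exact, $\lpi \mathcal{G} \in \Der^{\leq k_+}(Y)$; it may, however, have cohomology in arbitrarily negative degrees because $\pi$ need not be flat. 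Applying $\tau^{\geq k_- - 1}$ discards precisely this potentially infinite tail, leaving cohomology concentrated in the finite window $[k_- - 1,\, k_+]$, so $\mathcal{F} \in \Der^{\bd}(Y)$ as required.

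I do not expect a serious obstacle here: all the real content is carried by Lemma 2.5, and the construction above is essentially forced by it. The one subtlety worth flagging is that the naive candidate $\mathcal{F} = \lpi \mathcal{G}$ fails when $\pi$ is not flat, and it is the \emph{smart} truncation at level $k_- - 1$ — rather than any stupid truncation — that simultaneously cures the unboundedness of $\lpi \mathcal{G}$ below and leaves $\rpi \mathcal{F}$ equal to $\mathcal{G}$.
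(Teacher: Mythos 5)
Your construction is exactly the paper's: given $\mathcal{G} \in \Der^{[k_-,k_+]}(X)$, take $\tau^{\geq k_- - 1}\lpi\mathcal{G} \in \Der^{\bd}(Y)$ and apply the ``especially'' clause of Lemma 2.5. The paper states this in one line; your extra verification that the truncation lands in $\Der^{\bd}(Y)$ is a correct and welcome elaboration of a point the paper leaves implicit.
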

 \begin{proof}
  For each $k_- \leq k_+$ and $\mathcal{F} \in \Der ^{[k_-,k_+]} (X)$, choose $\tau ^{\geq k_- - 1} \lpi \mathcal{F} \in \Der ^{\bd} (Y)$.
 \end{proof}
 \begin{prop}[{\cite[Lemma 3.4.1]{MR2057015}}]
  Let $X$ be a normal variety which has rational singularities, and $\pi \colon Y \to X$ be a resolution such that the dimension of fibres of $\pi$ are at most $1$. Then, all fibres of $\pi$ are a point or the union of rational curves.
 \end{prop}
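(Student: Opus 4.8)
The plan is to argue locally around a closed point $x \in X$ and to split according to the dimension of the fibre $F := \pi^{-1}(x)$. When $\dim F = 0$, I would use that a proper birational morphism onto a normal variety has connected fibres (equivalently, $\rpi \mathcal{O}_Y \simeq \mathcal{O}_X$ together with Stein factorisation) to conclude that $F$ is set-theoretically a single point. So the real content is the case $\dim F = 1$, which occupies the rest of the argument.

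Since $X$ has rational singularities, $\rpi \mathcal{O}_Y \simeq \mathcal{O}_X$, and in particular $\rd ^{1} \pi _{*} \mathcal{O}_Y = 0$. Write $\mathcal{I} := \mathfrak{m}_x \mathcal{O}_Y$ and let $Y_n \subset Y$ be the closed subscheme cut out by $\mathcal{I}^{n+1}$, so that $Y_0 = \pi^{-1}(x)$ is the scheme-theoretic fibre and $(Y_0)_{\mathrm{red}} = F_{\mathrm{red}} =: C$. Since $\pi$ is proper (being a resolution), the theorem on formal functions gives \[ 0 \;=\; \bigl(\rd ^{1} \pi _{*} \mathcal{O}_Y\bigr)^{\wedge}_{x} \;\cong\; \varprojlim_{n} H^1(Y_n, \mathcal{O}_{Y_n}). \] I claim that each term $H^1(Y_n,\mathcal{O}_{Y_n})$ already vanishes. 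Indeed, from the exact sequences $0 \to \mathcal{I}^{n+1}/\mathcal{I}^{n+2} \to \mathcal{O}_{Y_{n+1}} \to \mathcal{O}_{Y_n} \to 0$ one reads off that the transition map $H^1(Y_{n+1},\mathcal{O}_{Y_{n+1}}) \to H^1(Y_n,\mathcal{O}_{Y_n})$ is surjective, because $\mathcal{I}^{n+1}/\mathcal{I}^{n+2}$ is a coherent sheaf supported on $F$, whose underlying space has dimension at most $1$, so its $H^2$ vanishes by Grothendieck's vanishing theorem. An inverse system of abelian groups with surjective transition maps and vanishing limit has all its terms zero; hence $H^1(Y_n,\mathcal{O}_{Y_n}) = 0$ for every $n$, and in particular $H^1(Y_0,\mathcal{O}_{Y_0}) = 0$.

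Next I would pass from $Y_0$ to the reduced fibre $C$ and then to its normalisation. The surjection $\mathcal{O}_{Y_0} \twoheadrightarrow \mathcal{O}_C$ has nilpotent kernel, again a coherent sheaf on the $1$-dimensional space $F$, so its $H^2$ vanishes and the long exact sequence exhibits $H^1(C,\mathcal{O}_C)$ as a quotient of $H^1(Y_0,\mathcal{O}_{Y_0}) = 0$; thus $H^1(C,\mathcal{O}_C)=0$. Letting $\nu\colon \tilde{C} = \bigsqcup_{i} \tilde{C}_i \to C$ be the normalisation, the sequence $0 \to \mathcal{O}_C \to \nu_*\mathcal{O}_{\tilde{C}} \to \mathcal{Q} \to 0$ with $\mathcal{Q}$ supported on finitely many points yields a surjection $0 = H^1(C,\mathcal{O}_C) \twoheadrightarrow \bigoplus_{i} H^1(\tilde{C}_i,\mathcal{O}_{\tilde{C}_i})$, whence $\tilde{C}_i \cong \mathbb{P}^1$ for each $i$. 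Therefore every irreducible component of $F_{\mathrm{red}}$ is a rational curve, so $F$ is a union of rational curves. (One also gets that the dual graph of $C$ is a tree, but this is not needed.)

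The step I expect to require the most care is the one linking $\rd ^{1} \pi _{*} \mathcal{O}_Y = 0$ to the vanishing of $H^1$ on the genuine (non-formal) fibre $Y_0$: this is precisely where the hypothesis $\dim F \le 1$ is used — twice — first to make the transition maps in the formal-functions tower surjective, and then to kill the obstruction coming from the nilpotents when reducing the scheme structure. The remaining manipulations are routine coherent-cohomology bookkeeping on curves.
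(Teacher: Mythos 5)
Your argument is correct, and it reaches the key vanishing $H^1(\text{fibre},\mathcal{O})=0$ by a genuinely different route than the paper. The paper reduces to $X$ affine, identifies $H^1(Y,\mathcal{O}_Y)\simeq H^0(X,R^1\pi_*\mathcal{O}_Y)=0$ and $H^2(Y,\mathcal{I}_F)\simeq H^0(X,R^2\pi_*\mathcal{I}_F)=0$, and then reads off $H^1(F,\mathcal{O}_F)=0$ from the ideal-sheaf sequence on all of $Y$; you instead work on the infinitesimal neighbourhoods $Y_n$ of the fibre and extract $H^1(Y_0,\mathcal{O}_{Y_0})=0$ from $R^1\pi_*\mathcal{O}_Y=0$ via the theorem on formal functions plus the Mittag--Leffler observation that a sequential inverse system with surjective transition maps and zero limit is termwise zero. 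Both proofs ultimately rest on the same two inputs, $R\pi_*\mathcal{O}_Y\simeq\mathcal{O}_X$ and Grothendieck vanishing of $H^2$ on the $1$-dimensional fibre. Your version buys two things: it avoids the paper's incidental (and in general false) claim that the resolution $\pi$ is flat, which the paper does not actually need since $R^2\pi_*=0$ already follows from the fibre-dimension bound; and by passing through the normalisation $\nu\colon\tilde{C}\to C$ rather than invoking \cite[IV, Example 1.3.5]{MR0463157} directly on an irreducible component $D$, you correctly handle the possibility that $D$ is singular a priori and land exactly on the conclusion that each component is a rational curve (normalisation $\simeq\mathbb{P}^1$). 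The paper's route is slightly shorter and more elementary (no completion or inverse limits), but yours is closer to the standard argument in the cited source and is the more robust of the two.
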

 \begin{proof}
  Let $F$ be a fibre of $\pi$, and assume that $F$ is not a point. Then, $\dim F = 1$. We may assume $X$ is affine. By \cite{MR0463157}[III, Corollary 11.3], $F$ is connected. Since $F$ is projective, $\Ho ^0 (F,\mathcal{O}_F) \simeq \mathbb{C}$.
  \par Consider a distinguished triangle \[
    \xymatrix{
     \mathcal{I}_F \ar[r] & \mathcal{O}_Y \ar[r] & \mathcal{O}_F \ar@{.>}[r] & \mathcal{I}_F [1],
    }
   \] and by applying $\rga (Y,-)$ and long exact sequence, we obtain the exact sequence \[
    \xymatrix{
     \Ho ^1 (Y,\mathcal{O}_Y) \ar[r] & \Ho ^1 (F,\mathcal{O}_F) \ar[r] & \Ho ^2 (Y,\mathcal{I}_F),
    }
   \] where $\mathcal{I}_F$ is the ideal sheaf of $F$. Since $\rpi \mathcal{O}_Y \simeq \mathcal{O}_X$ and $X$ is affine, $\Ho ^1 (Y,\mathcal{O}_Y) \simeq \Ho ^1 (X,\mathcal{O}_X) \simeq 0$. Since $\pi$ is flat, and $\dim F = 1$, $\rd ^2 \pi _*\mathcal{I}_F \simeq 0$. By \cite{MR0463157}[III, Proposition 8.5], \[
   \Ho ^2 (Y,\mathcal{I}_F) \simeq \Gamma (Y, \widetilde{\Ho ^2 (Y,\mathcal{I}_F)}) \simeq \Gamma (X,\rd ^2 \pi _*\mathcal{I}_F) \simeq 0.
  \]
  \par Now, consider an irreducible component $D$ in $F$. By the exact sequence \[
    \xymatrix{
     \Ho ^1 (C,\mathcal{O}_C) \ar[r] & \Ho ^1 (D,\mathcal{O}_D) \ar[r] & \Ho ^2 (C,\mathcal{I}_D),
    }
   \] we obtain that $\Ho ^1 (D,\mathcal{O}_D) \simeq 0$. By \cite{MR0463157}[IV, Example 1.3.5], we obtain that $D \simeq \mathbb{P}^1$.
 \end{proof}
 \section{Null categories}
 Let $X$ be a projective variety with isolated rational singularities, and $\pi \colon Y \to X$ be a resolution such that the dimension of fibres of $\pi$ are at most $1$. Denote by $E_1,\ldots,E_r$ the irreducible components of exceptional loci.
 \par Let \[
  \ker _{\qc} (Y) = \{\mathcal{F} \in \Der _{\qc} (Y) \mid \rpi \mathcal{F} \simeq 0\},
 \] and define $\ker ^{\dag} \rpi = {\ker _{\qc} \rpi} \cap \Der ^{\dag} (Y)$ ($\dag \in \{ b,- \}$) and $\ker ^{\co} \rpi = {\ker _{\qc} \rpi} \cap \coh (Y)$.
 \begin{rem}
  The full subcategories $\ker _{\qc} \rpi$, $\ker ^{\bd} \rpi$, $\ker ^- \rpi$ are triangulated. Moreover, by \cite[Lemma 2.1]{MR4460094}, $\ker ^{\co} \rpi$ is abelian, and the canonical inclusion $\ker ^{\co} \rpi \to \coh (Y)$ is an exact functor.
 \end{rem}
 \begin{prop}
  There is an identity \[
   \ker ^{\bd} \rpi = \langle \mathcal{O}_{E_j} (-1) \rangle _{j = 1}^r,
  \] where the right hand side is the smallest triangulated full subcategory in $\Der ^{\bd} (Y)$ containing $\mathcal{O}_{E_j} (-1)$ for all $j \in \{1,\ldots,r \}$.
 \end{prop}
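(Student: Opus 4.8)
Write $D:=\langle\mathcal{O}_{E_j}(-1)\rangle_{j=1}^{r}$ for the triangulated subcategory on the right and $E:=\bigcup_{j=1}^{r}E_j$. The inclusion $D\subseteq\ker^{\bd}\rpi$ is immediate: by Proposition~2.7 each $E_j\cong\mathbb{P}^1$ and $\pi|_{E_j}$ is constant, so $\rpi\,\mathcal{O}_{E_j}(-1)\simeq\rga(\mathbb{P}^1,\mathcal{O}(-1))=0$, and $\ker^{\bd}\rpi$ is triangulated. For the reverse inclusion I would first reduce to coherent sheaves: I claim $\ker^{\bd}\rpi=\langle\ker^{\co}\rpi\rangle$. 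Given $\mathcal{F}\in\ker^{\bd}\rpi$ with cohomology concentrated in degrees $[a,b]$, apply Lemma~2.5 with $k=a+2$ (the hypothesis holds since $\rpi\mathcal{F}\simeq 0$, hence $\rd^{i}\pi_*\mathcal{F}\simeq 0$ for all $i$): one gets $\rpi\tau^{\leq a}\mathcal{F}\simeq 0$ and $\rpi\tau^{\geq a+1}\mathcal{F}\simeq\rpi\mathcal{F}\simeq 0$. Since $\tau^{\leq a}\mathcal{F}\simeq\mathcal{H}^{a}(\mathcal{F})[-a]$, we have $\mathcal{H}^{a}(\mathcal{F})\in\ker^{\co}\rpi$, while $\tau^{\geq a+1}\mathcal{F}\in\ker^{\bd}\rpi$ has strictly shorter amplitude; the triangle $\tau^{\leq a}\mathcal{F}\to\mathcal{F}\to\tau^{\geq a+1}\mathcal{F}\to$ and induction on $b-a$ give the claim. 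So it suffices to prove $\ker^{\co}\rpi\subseteq D$.

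Next I would establish the structure of $\ker^{\co}\rpi$. Every $\mathcal{G}\in\ker^{\co}\rpi$ is supported on $E$, because $\pi$ is an isomorphism over the complement of the finite singular locus of $X$. Moreover $\mathcal{G}$ has no nonzero $0$-dimensional subsheaf: for $\mathcal{T}\subseteq\mathcal{G}$ of dimension $0$ the sequence $0\to\mathcal{T}\to\mathcal{G}\to\mathcal{G}/\mathcal{T}\to 0$ gives $\rpi(\mathcal{G}/\mathcal{T})\simeq\rpi\mathcal{T}[1]=(\pi_*\mathcal{T})[1]$, which sits in cohomological degree $-1$, whereas $\rpi$ of a coherent sheaf sits in degrees $[0,1]$ as the fibres of $\pi$ have dimension at most $1$; hence $\pi_*\mathcal{T}=0$ and $\mathcal{T}=0$. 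Together with Remark~3.1 ($\ker^{\co}\rpi$ is abelian with exact inclusion into $\coh(Y)$), the noetherianity of $\coh(Y)$, and the fact that the multiplicities $m_j(\mathcal{G})=\operatorname{length}_{\mathcal{O}_{Y,\eta_j}}\mathcal{G}_{\eta_j}$ (with $\eta_j$ the generic point of $E_j$) are additive in short exact sequences and vanish simultaneously only on $0$-dimensional sheaves, this shows that $\ker^{\co}\rpi$ is a finite-length abelian category. (Applying the duality $\mathbb{D}_Y(-):=R\mathcal{H}om_Y(-,\omega_Y[\dim Y])$, which preserves $\ker^{\bd}\rpi$ since $X$ is Cohen--Macaulay and $\rpi\omega_Y\simeq\omega_X$ by Grauert--Riemenschneider, together with the same degree trick, one moreover sees that every object of $\ker^{\co}\rpi$ is Cohen--Macaulay of dimension $1$.) Hence it is enough to show that every simple object of $\ker^{\co}\rpi$ lies in $D$.

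Let $S$ be simple in $\ker^{\co}\rpi$; by the previous step it is pure of dimension $1$ and $\supp S$ contains some component $E_j$. Put $\mathcal{G}^{(j)}:=\mathcal{H}om_{\mathcal{O}_Y}(\mathcal{O}_{E_j},S)\subseteq S$, the $\mathcal{I}_{E_j}$-torsion subsheaf. The socle of the finite-length module $S_{\eta_j}$ is nonzero and annihilated by $\mathcal{I}_{E_j,\eta_j}$, so $\mathcal{G}^{(j)}\neq 0$ and $E_j\subseteq\supp\mathcal{G}^{(j)}$; being a subsheaf of the pure sheaf $S$ it is torsion-free on $E_j\cong\mathbb{P}^1$, hence $\mathcal{G}^{(j)}\cong\bigoplus_i\mathcal{O}_{\mathbb{P}^1}(a_i)$. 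From $0\to\mathcal{G}^{(j)}\to S\to S/\mathcal{G}^{(j)}\to 0$ and $\rpi S\simeq 0$ one gets $\rpi(S/\mathcal{G}^{(j)})\simeq\rga(\mathbb{P}^1,\mathcal{G}^{(j)})[1]$, and comparing cohomological degrees as above forces $H^{0}(\mathbb{P}^1,\mathcal{G}^{(j)})=0$, i.e.\ all $a_i\leq -1$. Using the adjunction $\iota_{j*}\dashv\iota_j^!$ (for $\iota_j\colon E_j\hookrightarrow Y$) and $\mathcal{H}^{0}(\iota_j^!S)=\mathcal{G}^{(j)}$, the local-to-global spectral sequence for $\Hom_{\mathbb{P}^1}(\mathcal{O}(-1),\iota_j^!S)$ contributes in degree $0$ only through $\Hom_{\mathbb{P}^1}(\mathcal{O}(-1),\mathcal{G}^{(j)})$, so $\Hom_Y(\mathcal{O}_{E_j}(-1),S)\cong\Hom_{\mathbb{P}^1}(\mathcal{O}(-1),\mathcal{G}^{(j)})$, which is nonzero exactly when some $a_i=-1$. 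In that case the image of the evaluation morphism $\Hom_Y(\mathcal{O}_{E_j}(-1),S)\otimes_{\mathbb{C}}\mathcal{O}_{E_j}(-1)\to S$ is a nonzero subsheaf isomorphic to $\mathcal{O}_{E_j}(-1)^{\oplus m}$, which lies in $\ker^{\co}\rpi$; simplicity of $S$ then gives $S\cong\mathcal{O}_{E_j}(-1)^{\oplus m}$, whence $m=1$ and $S\in D$.

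It remains to rule out the possibility that for \emph{every} component $E_j\subseteq\supp S$ the bundle $\mathcal{G}^{(j)}$ has all summands of degree $\leq -2$; this is the main obstacle, and I expect it to be the technical heart of the proof. Here I would use that $\mathbb{D}_Y$ restricts to an anti-autoequivalence of $\ker^{\co}\rpi$ (up to the shift by $1$), sends simple objects to simple objects, exchanges sub- and quotient objects, and satisfies $\mathbb{D}_Y\mathcal{O}_{E_j}(-1)\cong\mathcal{O}_{E_j}(-1)[1]$. Applying the previous step to the simple object $\mathbb{D}_Y S[-1]\cong\mathcal{E}xt^{\dim Y-1}_Y(S,\omega_Y)$ yields the statement dual to it (quotient objects in place of subobjects), and combining the two with a local analysis of the complex $\iota_j^!S$ (which controls all of $\Ext^{\bullet}_Y(\mathcal{O}_{E_j}(-1),S)$) and with the connectedness of $\supp S$ inside a single fibre, I expect to reach a contradiction. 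This forces some $a_i=-1$, and the previous step then gives $S\cong\mathcal{O}_{E_j}(-1)$; granting this, every simple object of $\ker^{\co}\rpi$ is one of the $\mathcal{O}_{E_j}(-1)$, so $\ker^{\bd}\rpi=\langle\ker^{\co}\rpi\rangle=D$. In summary, the first three steps are essentially formal consequences of Lemma~2.5, Remark~3.1, Proposition~2.7 and the geometry of $\mathbb{P}^1$, while the real work lies in the last step: controlling which vector bundles on the $E_j$ can occur as the $\mathcal{I}_{E_j}$-torsion of a simple object of the null category.
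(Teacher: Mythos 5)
Your overall strategy matches the paper's: reduce to the heart $\ker ^{\co} \rpi$ by truncation (your use of Lemma 2.5 here is correct, and in fact justifies an assertion the paper leaves implicit, namely that $\Ho ^i \mathcal{F} \in \ker ^{\bd} \rpi$ for $\mathcal{F} \in \ker ^{\bd} \rpi$), show that $\ker ^{\co} \rpi$ has finite length, and then identify its simple objects with the sheaves $\mathcal{O}_{E_j}(-1)$. For the finite-length step the paper argues with Hilbert polynomials with respect to an ample divisor, while you use generic multiplicities along the $E_j$ together with the absence of zero-dimensional subobjects; both arguments are fine and essentially equivalent.

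The problem is the classification of simple objects, which is where all the content of the proposition sits, and which your proposal does not complete. You correctly show that \emph{if} some $\mathcal{I}_{E_j}$-torsion subsheaf $\mathcal{G}^{(j)}$ of a simple object $S$ has a direct summand $\mathcal{O}_{\mathbb{P}^1}(-1)$, then $S \simeq \mathcal{O}_{E_j}(-1)$; but the case in which every $\mathcal{G}^{(j)}$ has all summands of degree at most $-2$ is exactly the hard case, and for it you offer only a plan (duality $\mathbb{D}_Y$, the complex $\iota_j^! S$, connectedness of the support) ending in the admission that you \emph{expect} to reach a contradiction. This is a genuine gap, not a routine verification: the statement that the simple objects of the null category of a resolution with one-dimensional fibres are precisely the $\mathcal{O}_{E_j}(-1)$ is a substantive theorem, and the paper does not prove it either --- it imports it as \cite[Theorem 7.13]{bodzenta2020categorifying}. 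Until that step is either proved or explicitly cited, your argument establishes only the easy inclusion $\langle \mathcal{O}_{E_j}(-1) \rangle _{j=1}^r \subseteq \ker ^{\bd} \rpi$ together with the (correct) reduction of the converse to the classification of simples.
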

 \begin{proof}
  For any $\mathcal{F} \in \ker ^{\bd} \rpi$ and $i \in \mathbb{Z}$, we have $\Ho ^i \mathcal{F} \in \ker ^{\bd} \rpi$. Since $Y$ is noetherian, we can choose a maximal element $\mathcal{G}_0$ of $\{\mathcal{G} \in \ker ^{\co} \rpi \mid \mathcal{G} \subsetneq \mathcal{F}\}$. By maximality, $\mathcal{F} /{\mathcal{G}}$ is a simple object in $\ker ^{\co} \rpi$. By \cite[Theorem 7.13]{bodzenta2020categorifying}, $\mathcal{F} / \mathcal{G}$ forms $\mathcal{O}_{E_j} (-1)$ for some $j$. By repeating, we obtain a filtration \[
   \Ho ^i \mathcal{F} = \mathcal{G}_0 \supsetneq \mathcal{G}_1 \supsetneq \mathcal{G}_2 \supsetneq \cdots
  \] with $\mathcal{G}_i / \mathcal{G}_{i+1} \simeq \mathcal{O}_{E_{j_i}} (-1)$ for each $i$. Assume this filtration is unbounded.
  \par Fix an ample divisor $H \in \cl (Y)$, and let $P_H (\mathcal{G},n) = \chi (\mathcal{G} \otimes _{\mathcal{O}_Y} \mathcal{O}_Y (nH))$ called the Hilbert polynomial of $\mathcal{G}$ with respect to $H$ for each $\mathcal{G} \in \coh (Y)$. Since $\dim \supp \mathcal{G}_i \leq 1$, $P_H (\mathcal{G}_i,n)$ forms $a_i n + b_i$ for some $a_i,b_i \in \mathbb{Z}$. Since $H$ is ample, $P_H (\mathcal{G}_i,n) = \dim \Ho ^0 (X,\mathcal{G}_i \otimes _{\mathcal{O}_Y} \mathcal{O}_Y (nH)) \geq 0$ for large $n$, and we obtain $a_i \geq 0$. Similarly, we have $P_H (\mathcal{O}_{E_{j_i}} (-1),n) = (H \cdot E_{j_i}) n$. Note that $(H \cdot E_{j_i}) > 0$ by Nakai-Moishezon criterion. However, by unboundedness, $a_i < 0$ for large $i$. This leads to contradiction.
  \par Thus, the filtration above is bounded. This implies $\Ho ^i \mathcal{F} \in \ker ^{\co} \rpi$ and $\mathcal{F} \in \ker ^{\bd} \rpi$.
 \end{proof}
 \section{Semiorthogonal decompositions}
 \begin{defin}
  A semiorthogonal decomposition $\Der ^{\bd} (Y) = \langle \tilde{\mathcal{A}}_1,\ldots,\tilde{\mathcal{A}}_n \rangle$ is called compatible with $\pi$ if for each irreducible component $E_j$ of the exceptional loci of $\pi$, there exists $i \in \{ 1,\ldots,n \}$ such that $\mathcal{O}_{E_j} (-1) \in \tilde{\mathcal{A}}_i$.
 \end{defin}
 Define $\mathcal{E}_i = \{E_j \mid \mathcal{O}_{E_j}(-1) \in \tilde{\mathcal{A}}_i\}$ and $D_i = \bigcup _{E_j \in \mathcal{E}_i} E_j$ for each $i \in \{ 1,\ldots,n \}$.
 \begin{cor}
  For any $i \neq i'$, $D_i \cap D_{i'} = \emptyset$.
 \end{cor}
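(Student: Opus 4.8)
The plan is to argue by contradiction. Suppose $D_i\cap D_{i'}\neq\emptyset$ for some $i\neq i'$. Since $\mathcal{O}_{E_j}(-1)\neq 0$ can lie in at most one component of a semiorthogonal decomposition (an object in $\tilde{\mathcal A}_a\cap\tilde{\mathcal A}_b$ with $a\neq b$ has $\Hom$ to itself equal to $0$), the sets $\mathcal{E}_1,\dots,\mathcal{E}_n$ are pairwise disjoint, so unwinding the definitions of $D_i$ and $\mathcal{E}_i$ produces \emph{distinct} irreducible components $E_j\in\mathcal{E}_i$ and $E_{j'}\in\mathcal{E}_{i'}$ with $E_j\cap E_{j'}\neq\emptyset$. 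As $E_j$ and $E_{j'}$ are then distinct integral curves, $E_j\cap E_{j'}$ is a nonempty \emph{finite} set of closed points.

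The core of the argument is a local claim: if $C,C'$ are distinct members of $E_1,\dots,E_r$ with $C\cap C'\neq\emptyset$, then $\Ext^{d-1}_Y(\mathcal{O}_C(-1),\mathcal{O}_{C'}(-1))\neq 0$, where $d=\dim Y$. To prove it, recall that by Proposition 2.8 each such $C$ is isomorphic to $\mathbb{P}^1$, hence smooth, so the inclusion $C\hookrightarrow Y$ is a regular closed immersion of codimension $d-1$; locally $\mathcal{O}_C$ is resolved by the Koszul complex on a regular system of parameters $x_2,\dots,x_d$ cutting out $C$, and $\mathcal{O}_C(-1)$ is locally isomorphic to $\mathcal{O}_C$ as an $\mathcal{O}_Y$-module. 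Fixing $p\in C\cap C'$ and computing in $\mathcal{O}_{Y,p}$ with this Koszul resolution gives
\[
 \Ho ^{d-1}\rHomu _{\mathcal{O}_Y}(\mathcal{O}_C(-1),\mathcal{O}_{C'}(-1))_p\;\simeq\;\mathcal{O}_{C',p}\big/(x_2,\dots,x_d)\,\mathcal{O}_{C',p},
\]
and the right-hand side is nonzero precisely because $(x_2,\dots,x_d)\,\mathcal{O}_{C',p}$ is the ideal of the scheme-theoretic intersection $C\cap C'$ localized at $p$, a proper ideal since $p\in C\cap C'$. Hence $\Ho^{d-1}\rHomu_{\mathcal{O}_Y}(\mathcal{O}_C(-1),\mathcal{O}_{C'}(-1))$ is a nonzero coherent sheaf supported on the finite set $C\cap C'$. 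Since every $\Ho^b\rHomu_{\mathcal{O}_Y}(\mathcal{O}_C(-1),\mathcal{O}_{C'}(-1))$ has support contained in $C\cap C'$, it has no higher cohomology, so the local-to-global spectral sequence $\Ho^a\bigl(Y,\Ho^b\rHomu_{\mathcal{O}_Y}(-,-)\bigr)\Rightarrow\Ext^{a+b}_Y(-,-)$ is concentrated in the row $a=0$ and degenerates; in particular $\Ext^{d-1}_Y(\mathcal{O}_C(-1),\mathcal{O}_{C'}(-1))\simeq\Ho^0\bigl(Y,\Ho^{d-1}\rHomu_{\mathcal{O}_Y}(\mathcal{O}_C(-1),\mathcal{O}_{C'}(-1))\bigr)$, which is nonzero because a nonzero coherent sheaf with finite support has nonzero global sections. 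This proves the claim.

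Applying the claim to the pairs $(E_j,E_{j'})$ and $(E_{j'},E_j)$ — both legitimate, as $E_j$ and $E_{j'}$ are both smooth and meet each other — yields $\rHom_Y(\mathcal{O}_{E_j}(-1),\mathcal{O}_{E_{j'}}(-1))\neq 0$ and $\rHom_Y(\mathcal{O}_{E_{j'}}(-1),\mathcal{O}_{E_j}(-1))\neq 0$. On the other hand, $\mathcal{O}_{E_j}(-1)\in\tilde{\mathcal{A}}_i$ and $\mathcal{O}_{E_{j'}}(-1)\in\tilde{\mathcal{A}}_{i'}$ with $i\neq i'$, so semiorthogonality of $\langle\tilde{\mathcal{A}}_1,\dots,\tilde{\mathcal{A}}_n\rangle$ forces at least one of these two $\rHom$-complexes to vanish — a contradiction. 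Therefore $D_i\cap D_{i'}=\emptyset$.

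I expect the only delicate point to be the local computation, and in particular checking that the twists by $\mathcal{O}(-1)$ are harmless: this is legitimate because the cohomology sheaves $\Ho^b\rHomu_{\mathcal{O}_Y}(-,-)$ are computed Zariski-locally, $\mathcal{O}_C(-1)$ restricts to a trivial line bundle on any local piece of $C\simeq\mathbb{P}^1$, and $C\cap C'$ is zero-dimensional so the global geometry of the line bundles cannot enter (equivalently, one can record the invariant identity $\Ho^{d-1}\rHomu_{\mathcal{O}_Y}(\mathcal{O}_C(-1),\mathcal{F})\simeq(\mathcal{F}\otimes_{\mathcal{O}_Y}\mathcal{O}_C)\otimes_{\mathcal{O}_C}\mathcal{O}_C(1)\otimes_{\mathcal{O}_C}\det N_{C/Y}$ for any coherent $\mathcal{F}$). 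The remaining ingredients — the regular-immersion description of $C\hookrightarrow Y$, the Koszul computation of the top sheaf-$\Ext$, and the degeneration of the local-to-global spectral sequence by finiteness of supports — are then routine, though I would be careful to justify the cohomological amplitude bound that makes $\mathcal{O}_{C',p}/(x_2,\dots,x_d)\mathcal{O}_{C',p}$ genuinely the top cohomology sheaf.
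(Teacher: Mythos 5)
Your proof is correct, and at the top level it follows the same strategy as the paper: derive a contradiction with semiorthogonality by exhibiting a nonvanishing $\Ext$ group between $\mathcal{O}_{E}(-1)$ and $\mathcal{O}_{E'}(-1)$ whenever two exceptional curves assigned to different pieces of the decomposition meet. The difference is in how the nonvanishing is produced. The paper computes the derived dual $(\mathcal{O}_E(-1))^{\vee}$ via Grothendieck duality for $E\hookrightarrow Y$, rewrites $\rHom_Y$ as hypercohomology of a derived tensor product, and extracts a nonzero class from $\Ho^0$ of that tensor product at a point of $E\cap E'$ using Nakayama, degenerating the hypercohomology spectral sequence because $E\cap E'$ is $0$-dimensional. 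You instead resolve $\mathcal{O}_C$ locally by a Koszul complex (legitimate, since $C\simeq\mathbb{P}^1$ and $Y$ are smooth, so the immersion is regular of codimension $d-1$), identify the top local $\Ext$ sheaf as $\mathcal{O}_{C'}/I_C\mathcal{O}_{C'}$, and degenerate the local-to-global $\Ext$ spectral sequence by finiteness of supports; both degenerations rest on the same geometric input. One advantage of your route: it is written uniformly in $d=\dim Y$ and lands in $\Ext^{d-1}$, whereas the paper's duality computation carries an explicit shift $[-2]$ that literally matches only the threefold case (presumably a slip for $[1-d]$ in the general setting of the Main Theorem). Your preliminary observation that the sets $\mathcal{E}_i$ are pairwise disjoint, so that the two intersecting curves are genuinely distinct, is a point the paper leaves implicit, and your justification that the $\mathcal{O}(-1)$-twists do not affect the local computation is adequate.
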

 \begin{proof}
  We may assume $i > i'$. For each $E \in \mathcal{E}_i$ and $E' \in \mathcal{E}_{i'}$, \[
   \rHom _Y (\mathcal{O}_E (-1), \mathcal{O}_{E'} (-1)) \simeq 0
  \] by semiorthogonality. Assume $E \cap E' \neq \emptyset$. We have
  \begin{eqnarray*}
   \rHom _Y (\mathcal{O}_E (-1),\mathcal{O}_{E'} (-1)) & \simeq & \rgam (Y,\rHomu _Y (\mathcal{O}_E (-1),\mathcal{O}_{E'} (-1))) \\
   & \simeq & \rgam (Y,(\mathcal{O}_E (-1))^{\vee} \otimes ^{\lm}_Y \mathcal{O}_{E'} (-1)).
  \end{eqnarray*}
  Moreover, we can compute that $(\mathcal{O}_E (-1))^{\vee} \simeq \mathcal{O}_E (-1) \otimes _Y \omega _Y [-2]$. In fact, for any $\mathcal{G} \in \Der ^{\bd} (Y)$,
  \begin{eqnarray*}
   \rHom _Y (\mathcal{G},(\mathcal{O}_E (-1))^{\vee}) & \simeq & \rHom _Y (\mathcal{G} \otimes ^{\lm}_Y \mathcal{O}_E (-1),\mathcal{O}_Y) \\
   & \simeq & \rHom _Y (i_* (\li ^* \mathcal{G} \otimes _E \mathcal{O}_E (-1)),\mathcal{O}_Y) \\
   & \simeq & \rHom _E (\li ^* \mathcal{G} \otimes _E \mathcal{O}_E (-1),\omega _E \otimes _E i^* \omega _Y^{\vee} [-2]) \\
   & \simeq & \rHom _E (\li ^* \mathcal{G},\mathcal{O}_E (1) \otimes _E \omega _E \otimes _E i^* \omega _Y^{\vee} [-2]) \\
   & \simeq & \rHom _Y (\mathcal{G},i_* (\mathcal{O}_E (1) \otimes _E \omega _E \otimes _E i^* \omega _Y^{\vee}) [-2]) \\
   & \simeq & \rHom _Y (\mathcal{G},i_* (\mathcal{O}_E (1) \otimes _E \omega _E) \otimes _Y \omega _Y^{\vee} [-2]),
  \end{eqnarray*}
  functorially, where $i \colon E \hookrightarrow Y$ is the inclusion morphism. Note that the second and the last equalities hold by the projection formula, and the fifth equality holds by Grothendieck duality. Hence, by Yoneda lemma, \[
   (\mathcal{O}_E (-1))^{\vee} \simeq \mathcal{O}_E (1) \otimes _E \omega _E \otimes _Y \omega _Y^{\vee} [-2] \simeq \mathcal{O}_E (-1) \otimes _Y \omega _Y^{\vee} [-2],
  \] where the last equality holds since $E$ is a rational curve.
  \par Now, we consider a spectral sequence
  \begin{eqnarray*}
   E_2^{p,q} & = & \Ho ^p (Y,\Ho ^q (\mathcal{O}_E (-1) \otimes ^{\lm}_Y \omega _Y^{\vee} \otimes ^{\lm}_Y \mathcal{O}_{E'} (-1) [-2])) \\
   & \Rightarrow & \Ho ^{p+q} (Y,\mathcal{O}_E (-1) \otimes ^{\lm}_Y \omega _Y^{\vee} \otimes ^{\lm}_Y \mathcal{O}_{E'} (-1) [-2]).
  \end{eqnarray*}
  Since $j_x^* \Ho ^0 (\mathcal{O}_E (-1) \otimes ^{\lm}_Y \omega _Y^{\vee} \otimes ^{\lm}_Y \mathcal{O}_{E'} (-1)) \simeq k(x) \neq 0$ for any $x \in E \cap E'$, by Nakayama's lemma, $(\Ho ^0 (\mathcal{O}_E (-1) \otimes ^{\lm}_Y \omega _Y^{\vee} \otimes ^{\lm}_Y \mathcal{O}_{E'} (-1)))|_x \neq 0$, where $j_x \colon x \hookrightarrow Y$ is the inclusion morphism. Since $\dim E \cap E' = 0$, we obtain that
  \begin{eqnarray*}
   E_2^{0,2} & = & \Ho ^0 (X,\Ho ^0 (\mathcal{O}_E (-1) \otimes ^{\lm}_Y \omega _Y^{\vee} \otimes ^{\lm}_Y \mathcal{O}_{E'} (-1))) \\
   & \simeq & \Ho ^0 (E \cap E',\Ho ^0 (\mathcal{O}_E (-1) \otimes ^{\lm}_Y \omega _Y^{\vee} \otimes ^{\lm}_Y \mathcal{O}_{E'} (-1))) \\
   & \simeq & \bigoplus _{x \in E \cap E'} (\Ho ^0 (\mathcal{O}_E (-1) \otimes ^{\lm}_Y \omega _Y^{\vee} \otimes ^{\lm}_Y \mathcal{O}_{E'} (-1)))|_x \\
   & \neq & 0.
  \end{eqnarray*}
  Hence, $E_{\infty}^{0,2} \simeq E_2^{0,2} \neq 0$ since $E_2^{p,q} \simeq 0$ if $p > 0$ or $q > 2$. Therefore, \[
   \Ho ^{2} (Y,\mathcal{O}_E (-1) \otimes ^{\lm}_Y \omega _Y^{\vee} \otimes ^{\lm}_Y \mathcal{O}_{E'} (-1) [-2]) \neq 0.
  \] This leads to contradiction. Thus, $E \cap E' = \emptyset$, and hence, $D_i \cap D_{i'} = \emptyset$.
 \end{proof}
 Let \[
  \Der ^{\bd} (Y) = \langle \tilde{\mathcal{A}}_1,\ldots,\tilde{\mathcal{A}}_n \rangle
 \] be a semiorthogonal decomposition of $\Der ^{\bd} (Y)$ which is compatible with $\pi$.
 \begin{lem}
  For each $\mathcal{F} \in \ker ^{\bd} \rpi$, we can write $\mathcal{F} \simeq \bigoplus _{i = 1}^n \mathcal{F}_i$ with $\mathcal{F}_i \in \tilde{\mathcal{A}}_i \cap \ker ^{\bd} \rpi$.
 \end{lem}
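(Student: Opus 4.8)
The plan is to upgrade Proposition 3.3 into the statement that $\ker ^{\bd} \rpi$ is an \emph{orthogonal} direct sum of the triangulated full subcategories $\mathcal{N}_i := \langle \mathcal{O}_{E_j}(-1) \rangle _{E_j \in \mathcal{E}_i} \subset \Der ^{\bd} (Y)$ for $i \in \{1,\ldots,n\}$, each of which sits inside $\tilde{\mathcal{A}}_i \cap \ker ^{\bd} \rpi$. Granting this, the required decomposition of $\mathcal{F}$ is obtained by taking $\mathcal{F}_i$ to be the $\mathcal{N}_i$-component.

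First I would record the two containments. We have $\mathcal{N}_i \subset \tilde{\mathcal{A}}_i$ because $\tilde{\mathcal{A}}_i$ is a triangulated full subcategory of $\Der ^{\bd} (Y)$ containing the generators $\mathcal{O}_{E_j}(-1)$ with $E_j \in \mathcal{E}_i$ by Definition 4.1, and $\mathcal{N}_i \subset \ker ^{\bd} \rpi$ because $\ker ^{\bd} \rpi$ is triangulated (Remark 3.1) and contains each $\mathcal{O}_{E_j}(-1)$. Moreover $\mathcal{N}_1,\ldots,\mathcal{N}_n$ generate $\ker ^{\bd} \rpi$: since the semiorthogonal decomposition is compatible with $\pi$, every $E_j$ lies in some $\mathcal{E}_i$, so the triangulated subcategory generated by $\mathcal{N}_1 \cup \cdots \cup \mathcal{N}_n$ contains all of $\mathcal{O}_{E_1}(-1),\ldots,\mathcal{O}_{E_r}(-1)$ and hence contains $\ker ^{\bd} \rpi = \langle \mathcal{O}_{E_j}(-1) \rangle _{j=1}^r$ by Proposition 3.3; the reverse inclusion is clear.

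The crucial point is orthogonality. For $i \neq i'$, $E \in \mathcal{E}_i$ and $E' \in \mathcal{E}_{i'}$ one has $E \cap E' \subset D_i \cap D_{i'} = \emptyset$ by Corollary 4.2, so $\rHomu _Y (\mathcal{O}_E(-1), \mathcal{O}_{E'}(-1))$ and $\rHomu _Y (\mathcal{O}_{E'}(-1), \mathcal{O}_E(-1))$, being supported on $E \cap E' = \emptyset$, both vanish; applying $\rgam (Y,-)$ gives $\rHom _Y (\mathcal{O}_E(-1), \mathcal{O}_{E'}(-1)) \simeq 0 \simeq \rHom _Y (\mathcal{O}_{E'}(-1), \mathcal{O}_E(-1))$. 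Since the objects orthogonal (in both directions, in all degrees) to a fixed set of objects form a triangulated subcategory, passing from the generators to the generated subcategories yields $\Hom (\mathcal{N}_i, \mathcal{N}_{i'}[k]) = 0$ for all $i \neq i'$ and $k \in \mathbb{Z}$.

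Finally I would invoke the elementary fact that a triangulated category $\mathcal{T}$ generated by pairwise orthogonal triangulated full subcategories $\mathcal{N}_1,\ldots,\mathcal{N}_n$ is their direct sum: the full subcategory of objects isomorphic to $\bigoplus _i X_i$ with $X_i \in \mathcal{N}_i$ contains each $\mathcal{N}_i$, is closed under shifts, and is closed under cones because orthogonality forces any morphism $\bigoplus _i X_i \to \bigoplus _i Y_i$ to be of the form $\bigoplus _i f_i$ with $f_i \colon X_i \to Y_i$, whence its cone is $\bigoplus _i \cone (f_i)$ with $\cone (f_i) \in \mathcal{N}_i$; being triangulated and containing the generators, this subcategory is all of $\mathcal{T}$. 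Applying this with $\mathcal{T} = \ker ^{\bd} \rpi$ produces $\mathcal{F} \simeq \bigoplus _{i=1}^n \mathcal{F}_i$ with $\mathcal{F}_i \in \mathcal{N}_i \subset \tilde{\mathcal{A}}_i \cap \ker ^{\bd} \rpi$, as required. The substantive input is entirely Corollary 4.2 — it is what makes the components of the null category mutually orthogonal — while the support vanishing of $\rHomu$ and the abstract direct-sum lemma are formal; I expect the only place needing a little care is the last lemma, but it is routine.
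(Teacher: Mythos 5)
Your proposal is correct and follows essentially the same route as the paper: both arguments run a d\'evissage over the generation of $\ker ^{\bd} \rpi$ by the $\mathcal{O}_{E_j}(-1)$ from Proposition 3.3, using Corollary 4.2 to kill the cross-components of any morphism between direct sums so that cones decompose. Your version merely packages this as an orthogonal decomposition into the subcategories $\mathcal{N}_i$ and makes explicit (via the support of $\rHomu$) the vanishing $\varphi _{ij} = 0$ that the paper attributes to Corollary 4.2 without comment.
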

 \begin{proof}
  By proposition 3.2, the null category $\ker ^{\bd} \rpi = \langle \mathcal{O}_{E_j}(-1) \rangle _{j = 1}^r$ consists of complexes obtained by repeating a finite number of extensions and shifts of $\mathcal{O}_{E_j} (-1)$ for $j \in \{1,\ldots,r \}$. We prove the lemma by induction on the structure of complexes. We take $\mathcal{F} \in \ker ^{\bd} \rpi$.
   \begin{itemize}
    \item[-] If $\mathcal{F}$ forms $\mathcal{O}_{E_j} (-1)$ with $E_j \in \mathcal{E}_{i_0}$, we can choose \[
     \mathcal{F}_i =
      \begin{cases}
       \mathcal{O}_{E_j} (-1) & i = i_0, \\
       0 & otherwise.
      \end{cases}
     \]
    \item[-] If $\mathcal{F}$ forms $\cone(\varphi \colon \bigoplus _{i = 1}^n \mathcal{G}_i \to \bigoplus _{i = 1}^n \mathcal{H}_i)$, write $\varphi = (\varphi _{ij} \colon \mathcal{G}_i \to \mathcal{H}_j)_{i,j = 1}^n$. By Corollary 4.2, $\varphi _{ij} = 0$ for $i \neq j$. Hence, $\varphi = \bigoplus _{i= 1}^n \varphi _{ii}$, and we can take $\mathcal{F}_i = \cone(\varphi _{ii})$.
    \item[-] If $\mathcal{F}$ forms ($\bigoplus _{i = 1}^n \mathcal{G}_i)[\pm 1]$, we can take $\mathcal{F}_i = \mathcal{G}_i [\pm 1]$. \qedhere
   \end{itemize}
 \end{proof}
 \begin{rem}
  Since the author do not be sure the correctness of \cite[Lemma 2.15]{MR4382477}, we restrict the previous lemma to $\Der ^{\bd} (Y)$ instead of $\Der ^- (Y)$. Instead, we work the following section in $\Der ^{\bd} (Y)$, by applying the truncation functor to unbounded complexes.
 \end{rem}
 \section{The proof of the main theorem}
  Define $\mathcal{A}_i$ as a full subcategory $\rpi \tilde{\mathcal{A}}_i$ in $\Der ^{\bd} (X)$ for each $i \in \{ 1,\ldots,n \}$.
 \begin{lem}
  For each $i \in \{ 1,\ldots,n \}$ and $k_- \leq k_+$, \[
     \tau ^{\geq l} \lpi \rpi (\tilde{\mathcal{A}}_i \cap \Der ^{[k_-,k_+]} (Y)) \subseteq \langle \tilde{\mathcal{A}}_i,\tilde{\mathcal{A}}_{i+1} \cap \ker ^{\bd} \rpi,\ldots,\tilde{\mathcal{A}}_n \cap \ker ^{\bd} \rpi \rangle
    \] for all $l \leq k_- - 1$.
 \end{lem}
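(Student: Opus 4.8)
The plan is to compare $\tau^{\geq l}\lpi\rpi\mathcal{F}$ with $\mathcal{F}$ itself (for $\mathcal{F}\in\tilde{\mathcal{A}}_i\cap\Der^{[k_-,k_+]}(Y)$ and $l\leq k_--1$) by way of the counit $c\colon\lpi\rpi\mathcal{F}\to\mathcal{F}$ of the adjunction $\lpi\dashv\rpi$, and to control the discrepancy using the null category. Since the fibres of $\pi$ have dimension at most $1$, $\rpi\mathcal{F}\in\Der^{[k_-,k_+ + 1]}(X)$, so $\lpi\rpi\mathcal{F}$ is concentrated in degrees $\leq k_+ + 1$ and $\tau^{\geq l}\lpi\rpi\mathcal{F}\in\Der^{[l,k_+ + 1]}(Y)$. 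By Corollary 2.2, $\rpi(c)$ is an isomorphism and $\rd^j\pi_*(\lpi\rpi\mathcal{F})\simeq\Ho^j(\rpi\mathcal{F})$ vanishes for $j<k_-$; so by Lemma 2.5 (using $l-1\leq k_--2$) also $\rpi\tau^{\leq l-1}\lpi\rpi\mathcal{F}\simeq0$, and hence $\rpi$ of the truncation $t\colon\lpi\rpi\mathcal{F}\to\tau^{\geq l}\lpi\rpi\mathcal{F}$ is an isomorphism. As $\mathcal{F}$ is concentrated in degrees $\geq k_->l-1$, the counit factors as $c=\hat c\circ t$ for some $\hat c\colon\tau^{\geq l}\lpi\rpi\mathcal{F}\to\mathcal{F}$, and then $\rpi(\hat c)$ is an isomorphism, so $N:=\cone(\hat c)[-1]$ lies in $\ker^{\bd}\rpi$ (it is bounded because $\tau^{\geq l}\lpi\rpi\mathcal{F}$ and $\mathcal{F}$ are). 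The triangle $N\to\tau^{\geq l}\lpi\rpi\mathcal{F}\xrightarrow{\hat c}\mathcal{F}\to N[1]$ together with $\mathcal{F}\in\tilde{\mathcal{A}}_i$ then reduces the statement to proving $N\in\langle\tilde{\mathcal{A}}_i\cap\ker^{\bd}\rpi,\ldots,\tilde{\mathcal{A}}_n\cap\ker^{\bd}\rpi\rangle$, i.e.\ that $N$ has no component in $\tilde{\mathcal{A}}_j$ for $j<i$.

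The main obstacle is precisely this last assertion, which I would reach through the cone $C:=\cone(c)$ of the counit. Since $\rpi(c)$ is an isomorphism, $C\in\ker^-\rpi$, and by Lemma 2.5 its cohomology sheaves lie in $\ker^{\co}\rpi$. Adjunction gives $\Hom_Y(\lpi\rpi\mathcal{F},\mathcal{O}_{E_k}(-1)[m])\simeq\Hom_X(\rpi\mathcal{F},(\rpi\mathcal{O}_{E_k}(-1))[m])=0$ for all $k,m$, because $\mathcal{O}_{E_k}(-1)\in\ker^{\bd}\rpi$; applying $\Hom_Y(-,\mathcal{O}_{E_k}(-1)[m])$ to the triangle $\lpi\rpi\mathcal{F}\xrightarrow{c}\mathcal{F}\to C\to$ therefore yields $\Hom_Y(C,\mathcal{O}_{E_k}(-1)[m])\simeq\Hom_Y(\mathcal{F},\mathcal{O}_{E_k}(-1)[m])$. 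If $E_k\in\mathcal{E}_j$ with $j<i$, then $\mathcal{O}_{E_k}(-1)\in\tilde{\mathcal{A}}_j$ and $\mathcal{F}\in\tilde{\mathcal{A}}_i$, so semiorthogonality of $\langle\tilde{\mathcal{A}}_1,\ldots,\tilde{\mathcal{A}}_n\rangle$ forces the right-hand side to vanish. Hence $\Hom_Y(C,\mathcal{O}_{E_k}(-1)[m])=0$ for all $E_k\in\bigcup_{j<i}\mathcal{E}_j$ and all $m$.

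Now comes the support argument. By Corollary 4.2 the $D_j$ are pairwise disjoint, so each connected component of the exceptional locus lies in a single $D_j$, and consequently $C$ splits as a direct sum of complexes supported on the individual $D_j$. If the summand $C_j$ supported on $D_j$ with $j<i$ were nonzero, its top cohomology sheaf would be a nonzero object of the finite-length abelian category $\ker^{\co}\rpi$ supported on $D_j$, hence would surject onto a simple $\mathcal{O}_{E_k}(-1)$ with $E_k\in\mathcal{E}_j$; composing $C_j\to\tau^{\geq M}C_j$ with that quotient would produce a nonzero morphism $C\to\mathcal{O}_{E_k}(-1)[m]$, contradicting the vanishing just proved. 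Therefore $\supp C\cap D_j=\emptyset$ for every $j<i$.

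Finally I would pass from $C$ to $N$. The octahedral axiom applied to $c=\hat c\circ t$ produces a triangle $\tau^{\leq l-1}\lpi\rpi\mathcal{F}[1]\to C\to N[1]\to$, so $\supp N\subset\supp C\cup\supp\tau^{\leq l-1}\lpi\rpi\mathcal{F}$; and for $l'\leq l-1\leq k_--1$, the long exact cohomology sequence of $\lpi\rpi\mathcal{F}\xrightarrow{c}\mathcal{F}\to C$ together with $\Ho^{l'}\mathcal{F}=\Ho^{l'-1}\mathcal{F}=0$ gives $\Ho^{l'}\lpi\rpi\mathcal{F}\simeq\Ho^{l'-1}C$, whence $\supp\tau^{\leq l-1}\lpi\rpi\mathcal{F}\subset\supp C$. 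Thus $\supp N\subset\supp C$ is disjoint from every $D_j$ with $j<i$; since also $N\in\ker^{\bd}\rpi$, Proposition 3.2 shows $N$ is generated by the $\mathcal{O}_{E_k}(-1)$ with $E_k\subset\supp N$, each of which, belonging to some $\mathcal{E}_j$ with $j\geq i$, lies in $\tilde{\mathcal{A}}_j\cap\ker^{\bd}\rpi$. Hence $N\in\langle\tilde{\mathcal{A}}_i\cap\ker^{\bd}\rpi,\ldots,\tilde{\mathcal{A}}_n\cap\ker^{\bd}\rpi\rangle$, and combining with the triangle $N\to\tau^{\geq l}\lpi\rpi\mathcal{F}\to\mathcal{F}\to N[1]$ and $\mathcal{F}\in\tilde{\mathcal{A}}_i$ we conclude $\tau^{\geq l}\lpi\rpi\mathcal{F}\in\langle\tilde{\mathcal{A}}_i,\tilde{\mathcal{A}}_{i+1}\cap\ker^{\bd}\rpi,\ldots,\tilde{\mathcal{A}}_n\cap\ker^{\bd}\rpi\rangle$, as required.
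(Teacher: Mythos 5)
Your argument is correct, and while it follows the same overall strategy as the paper --- compare $\tau^{\geq l}\lpi\rpi\mathcal{F}$ with $\mathcal{F}$ via the counit and show that the resulting object of the null category has no components indexed by $j<i$ --- the technical execution is genuinely different. The paper applies Lemma 4.3 to each truncation $\tau^{\geq l}\mathcal{G}$ of the cone $\mathcal{G}$ of the counit, reassembles these into a decomposition $\mathcal{G}\simeq\bigoplus_j\mathcal{G}_j$ by a homotopy limit over $l$, and kills $\mathcal{G}_j$ for $j<i$ by proving $\rHom_Y(\mathcal{G},\mathcal{G}_j)\simeq 0$ and using that $\mathcal{G}_j$ is a direct summand. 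You bypass both Lemma 4.3 and the homotopy limit: you test the cone $C$ against the simple objects (adjunction plus semiorthogonality give $\Hom_Y(C,\mathcal{O}_{E_k}(-1)[m])=0$ for $E_k\in\mathcal{E}_j$ with $j<i$), split $C$ along the pairwise disjoint $D_j$ of Corollary 4.2, rule out a nonzero summand over $D_j$ with $j<i$ via a simple quotient of its top cohomology sheaf, and then transfer the support constraint to the bounded object $N=\cone(\hat c)[-1]$ by the octahedron, where Proposition 3.2 finishes the job. What this buys is a cleaner handling of the unboundedness of the cone: the convergence and diagonality issues in the paper's homotopy-limit step are replaced by a short support estimate. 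Two small points to tidy: your reference to the truncation lemma should be to Lemma 2.3 in the paper's internal numbering, and your final appeal to Proposition 3.2 uses the refinement that $N\in\ker^{\bd}\rpi$ is generated by only those $\mathcal{O}_{E_k}(-1)$ with $E_k\subseteq\supp N$; this is immediate from the Jordan--H\"older filtration constructed in its proof but is not the literal statement, so it deserves a sentence.
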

 \begin{proof}
  For each $\mathcal{F} \in \tilde{\mathcal{A}}_i \cap \Der ^{[k_-,k_+]} (Y)$, we can take a distinguished triangle \[
     \xymatrix@C=10pt{
      \lpi \rpi \mathcal{F} \ar[r] & \mathcal{F} \ar[r] & \mathcal{G} \ar@{.>}[r] &.
     }
    \] Since ${\rpi} \circ \lpi \simeq \id$, $\mathcal{G} \in \ker ^- \rpi$. Moreover, by Lemma 4.3, for each $l \leq k_- - 1$, there is the decomposition \[
     \tau ^{\geq l} \mathcal{G} \simeq \bigoplus _{j = 1}^n \mathcal{G}_{lj}
    \] with $\mathcal{G}_{lj} \in \tilde{\mathcal{A}} _i \cap \ker ^{\bd} \rpi$. Then, the truncation morphism $(\tau ^{\geq l} \lpi \mathcal{F} \to \tau ^{\geq l + 1} \lpi \mathcal{F}) = \bigoplus _{j = 1} ^n (\mathcal{G}_{lj} \to \mathcal{G}_{(l + 1)j})$ since $\supp \mathcal{G}_{lj} \cap \supp \mathcal{G}_{(l + 1)j'} = \emptyset$ for any $j \neq j'$. Thus, \[
     \mathcal{G} \simeq \bigoplus _{j = 1} ^n \holim _{l \leq k_- - 1} \mathcal{G}_{lj}.
    \] Define $\mathcal{G}_j = \holim _{l \leq k_- - 1} \mathcal{G}_{lj}$. Note that $\mathcal{G}_j \in \ker ^- \rpi$ by the previous direct sum decomposition of $\mathcal{G} \in \ker ^- \rpi$. For any $j < i$, by semiorthogonality, \[
     \rHom _Y (\mathcal{F},\holim _{l \leq k_- - 1} \mathcal{G}_{lj}) \simeq \holim _{l \leq k_- - 1} \rHom _Y (\mathcal{F},\mathcal{G}_{lj}) \simeq 0.
    \] Hence, \[
     \rHom _Y (\mathcal{F},\mathcal{G}_j) \simeq 0.
    \] Moreover, \[
     \rHom _Y (\lpi \rpi \mathcal{F},\mathcal{G}_j) \simeq \rHom _X (\rpi \mathcal{F},\rpi \mathcal{G}_j) \simeq \rHom _X (\rpi \mathcal{F},0) \simeq 0.
    \] Therefore, $\rHom _Y (\mathcal{G},\mathcal{G}_j) \simeq 0$, and hence, $\mathcal{G}_j \simeq 0$.
    \par For all $l \leq k_- - 1$ and $p \geq l$, $\Ho ^p \mathcal{G}_j \to \Ho ^p \mathcal{G}_{lj}$ is isomorphic since the morphism $(\Ho ^p \mathcal{G} \to \Ho ^p \tau ^{\geq l} \mathcal{G}) = \bigoplus _{i = 1} ^n (\Ho ^p \mathcal{G}_j \to \Ho ^p \mathcal{G}_{lj})$ is isomorphic. Hence, $\Ho ^p \mathcal{G}_{lj} \simeq 0$. That is, $\mathcal{G}_{lj} \simeq 0$. Hence, \[
     \tau ^{\geq l} \mathcal{G} \in \langle \tilde{\mathcal{A}} _i \cap \ker ^{\bd} \rpi,\tilde{\mathcal{A}} _{i+1} \cap \ker ^{\bd} \rpi,\ldots,\tilde{\mathcal{A}} _n \cap \ker ^{\bd} \rpi \rangle.
    \] Since $\tau ^{\geq l} \mathcal{F} \simeq \mathcal{F} \in \tilde{\mathcal{A}}_i$, \[
     \tau ^{\geq l} \lpi \rpi \mathcal{F} \in \langle \tilde{\mathcal{A}}_i,\tilde{\mathcal{A}}_{i+1} \cap \ker ^{\bd} \rpi,\ldots,\tilde{\mathcal{A}}_n \cap \ker ^{\bd} \rpi \rangle. \qedhere
    \]
 \end{proof}
 \begin{prop}
  For all $i \in \{1,\ldots,n \}$, $\mathcal{A}_i$ is a triangulated full subcategory in $\Der ^{\bd} (X)$.
 \end{prop}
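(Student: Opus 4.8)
The plan is to show that $\mathcal{A}_i$ — which is full and stable under isomorphism by construction — is closed under shifts and under cones. Closure under shifts is immediate from $\rpi(\mathcal{F}[m])\simeq(\rpi\mathcal{F})[m]$, so the content is closure under cones.

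The device is the triangulated full subcategory $\mathcal{C}_i:=\langle\tilde{\mathcal{A}}_i,\tilde{\mathcal{A}}_{i+1}\cap\ker^{\bd}\rpi,\ldots,\tilde{\mathcal{A}}_n\cap\ker^{\bd}\rpi\rangle\subseteq\Der^{\bd}(Y)$, the right-hand side of Lemma 5.1. I first observe that $\rpi\mathcal{C}_i=\mathcal{A}_i$: setting $\mathcal{N}:=\langle\tilde{\mathcal{A}}_{i+1}\cap\ker^{\bd}\rpi,\ldots,\tilde{\mathcal{A}}_n\cap\ker^{\bd}\rpi\rangle$, semiorthogonality inside $\Der^{\bd}(Y)=\langle\tilde{\mathcal{A}}_1,\ldots,\tilde{\mathcal{A}}_n\rangle$ gives $\Hom_Y(\mathcal{N},\tilde{\mathcal{A}}_i)=0$, so $\mathcal{C}_i=\langle\tilde{\mathcal{A}}_i,\mathcal{N}\rangle$ is a semiorthogonal decomposition; hence every object of $\mathcal{C}_i$ fits into a distinguished triangle with one term in $\mathcal{N}\subseteq\ker^{\bd}\rpi$ and the other in $\tilde{\mathcal{A}}_i$, and applying $\rpi$ (which annihilates the term in $\ker^{\bd}\rpi$) shows $\rpi\mathcal{C}_i\subseteq\rpi\tilde{\mathcal{A}}_i=\mathcal{A}_i$, while the reverse inclusion holds because $\tilde{\mathcal{A}}_i\subseteq\mathcal{C}_i$.

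Now take a morphism $f\colon\rpi\mathcal{F}\to\rpi\mathcal{G}$ with $\mathcal{F}\in\tilde{\mathcal{A}}_i\cap\Der^{[a_-,a_+]}(Y)$ and $\mathcal{G}\in\tilde{\mathcal{A}}_i\cap\Der^{[b_-,b_+]}(Y)$; every morphism between objects of $\mathcal{A}_i$ is isomorphic to one of this shape, so this suffices. Put $l:=\min\{a_-,b_-\}-1$ and let $h:=\tau^{\geq l}(\lpi f)\colon\tau^{\geq l}\lpi\rpi\mathcal{F}\to\tau^{\geq l}\lpi\rpi\mathcal{G}$. Since $l\leq a_--1$ and $l\leq b_--1$, Lemma 5.1 places both the source and the target of $h$ in $\mathcal{C}_i$, so $\mathcal{C}':=\cone(h)\in\mathcal{C}_i$; it remains to identify $\rpi\mathcal{C}'$ with $\cone(f)$. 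Since $\rpi$ is exact, $\rpi\mathcal{C}'\simeq\cone(\rpi h)$. The truncation morphism $t_\mathcal{F}\colon\lpi\rpi\mathcal{F}\to\tau^{\geq l}\lpi\rpi\mathcal{F}$ has cone $(\tau^{\leq l-1}\lpi\rpi\mathcal{F})[1]$; since $\rpi\lpi\rpi\mathcal{F}\simeq\rpi\mathcal{F}$ has no cohomology below degree $a_-$ (as $\mathcal{F}\in\Der^{\geq a_-}(Y)$ and $\rpi$ has non-negative cohomological amplitude), Lemma 2.5 applied to $\lpi\rpi\mathcal{F}$ with $k=l+1\leq a_-$ gives $\rpi\tau^{\leq l-1}\lpi\rpi\mathcal{F}\simeq 0$, so $\rpi t_\mathcal{F}$ is an isomorphism, and likewise $\rpi t_\mathcal{G}$. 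Feeding these into $\rpi$ of the naturality square $h\circ t_\mathcal{F}=t_\mathcal{G}\circ\lpi f$ and using a natural isomorphism $\rpi\circ\lpi\simeq\id$, one obtains isomorphisms $u,v$ with $v\circ\rpi h=f\circ u$, so $\cone(\rpi h)\simeq\cone(f)$. Hence $\cone(f)\simeq\rpi\mathcal{C}'\in\rpi\mathcal{C}_i=\mathcal{A}_i$, which proves closure under cones and hence the proposition.

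The step I expect to be the main obstacle is this last coordination: the truncation level $l$ must be low enough — below both $a_-$ and $b_-$ — that Lemma 5.1 simultaneously places $\tau^{\geq l}\lpi\rpi\mathcal{F}$ and $\tau^{\geq l}\lpi\rpi\mathcal{G}$ inside $\mathcal{C}_i$, yet this very bound is what makes $\rpi$ of the two truncation maps invertible through Lemma 2.5, so that $\rpi$ carries $\cone(h)$ onto $\cone(f)$ rather than onto an unrelated cone. Once $\mathcal{C}_i$ is identified and this $l$ is fixed, everything else is a diagram chase using naturality of $\tau^{\geq l}$ and of the isomorphism $\rpi\circ\lpi\simeq\id$.
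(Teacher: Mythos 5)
Your proof is correct and takes essentially the same route as the paper's: write $\cone(f)$ as $\rpi$ of the cone of the truncated pullback $\tau^{\geq l}\lpi f$, use Lemma 5.1 to place that cone in $\langle \tilde{\mathcal{A}}_i,\tilde{\mathcal{A}}_{i+1}\cap\ker^{\bd}\rpi,\ldots,\tilde{\mathcal{A}}_n\cap\ker^{\bd}\rpi\rangle$, and push forward. You only make explicit two steps the paper leaves implicit, namely that $\rpi$ carries this subcategory into $\mathcal{A}_i$ and that $\rpi$ of the truncation maps are isomorphisms so that the pushed-forward cone really is $\cone(f)$.
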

 \begin{proof}
  For any $k_- \leq k_+$ and $f \colon \rpi \mathcal{F} \to \rpi \mathcal{G}$ in $\rpi (\tilde{\mathcal{A}}_i \cap \Der ^{[k_-,k_+]} (Y))$, $f = \rpi \tau ^{\geq k_- - 1} \lpi f$ by Lemma 2.3. Hence, \[
   \cone f \simeq \cone \rpi \tau ^{\geq k_- - 1} \lpi f \simeq \rpi \cone \tau ^{\geq k_- - 1} \lpi f.
  \] Moreover, by Lemma 5.1, \[
   \rpi \cone \tau ^{\geq k_- - 1} \lpi f \in \mathcal{A}_i. \qedhere
  \]
 \end{proof}
 Let $\tilde{\alpha}_i \colon \Der ^{\bd} (Y) \to \tilde{\mathcal{A}}_i$ be the projections.
 \begin{thm}
  There is a semiorthogonal decomposition \[
   \Der ^{\bd} (X) = \langle \mathcal{A}_1,\ldots,\mathcal{A}_n \rangle
  \] such that the projections $\alpha _i$ are isomorphic to ${\rpi} \circ \tilde{\alpha}_i \circ \tau^{\geq k_- - 1} \circ \lpi$.
 \end{thm}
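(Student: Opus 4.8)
The plan is to verify the two defining properties of a semiorthogonal decomposition for the collection $\langle \mathcal{A}_1,\ldots,\mathcal{A}_n\rangle$: semiorthogonality ($\rHom_X(\mathcal{A}_i,\mathcal{A}_j)\simeq 0$ for $i>j$) and generation ($\Der^{\bd}(X)=\langle\mathcal{A}_1,\ldots,\mathcal{A}_n\rangle$), and then to identify the projection functors. The key technical input is that $\rpi$ is essentially surjective (Corollary~2.6) with a one-sided inverse built from $\tau^{\geq k_- - 1}\circ\lpi$ on $\Der^{[k_-,k_+]}(X)$: indeed Lemma~2.5 gives $\rpi\circ\tau^{\geq k_- - 1}\circ\lpi\simeq\id$ on $\Der^{[k_-,k_+]}(X)$, and Lemma~2.3 (the $k=k_-$ case of Lemma~2.5, written as $\rpi\tau^{\geq k_- -1}\lpi f = f$ on morphisms) lets us transport morphisms downstairs back upstairs up to the null category. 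So every object and morphism of $\Der^{\bd}(X)$ comes from $\Der^{\bd}(Y)$ via $\lpi$ followed by a truncation, and the only ambiguity is controlled by $\ker\rpi$, whose structure is pinned down by Proposition~3.2 and Lemma~4.3.

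First I would establish semiorthogonality. Fix $i>j$ and objects $\rpi\mathcal{F}\in\mathcal{A}_i$, $\rpi\mathcal{G}\in\mathcal{A}_j$ with $\mathcal{F}\in\tilde{\mathcal{A}}_i$, $\mathcal{G}\in\tilde{\mathcal{A}}_j$, chosen in bounded degrees $[k_-,k_+]$. By adjunction $\rHom_X(\rpi\mathcal{F},\rpi\mathcal{G})\simeq\rHom_Y(\lpi\rpi\mathcal{F},\mathcal{G})$. Using the distinguished triangle $\lpi\rpi\mathcal{F}\to\mathcal{F}\to\mathcal{C}\to$ with $\mathcal{C}\in\ker^-\rpi$ (from $\rpi\circ\lpi\simeq\id$), and then truncating, Lemma~5.1 shows that after applying $\tau^{\geq l}$ for $l\leq k_- -1$ the object $\tau^{\geq l}\lpi\rpi\mathcal{F}$ lies in $\langle\tilde{\mathcal{A}}_i,\tilde{\mathcal{A}}_{i+1}\cap\ker^{\bd}\rpi,\ldots,\tilde{\mathcal{A}}_n\cap\ker^{\bd}\rpi\rangle$, all of whose components are right-orthogonal to $\mathcal{G}\in\tilde{\mathcal{A}}_j$ by semiorthogonality of the decomposition of $\Der^{\bd}(Y)$ (since $j<i$). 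Passing to the homotopy limit over $l$, as in the proof of Lemma~5.1, yields $\rHom_Y(\lpi\rpi\mathcal{F},\mathcal{G})\simeq 0$, hence $\rHom_X(\rpi\mathcal{F},\rpi\mathcal{G})\simeq 0$.

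Next I would prove generation. Given $\mathcal{H}\in\Der^{\bd}(X)$, say $\mathcal{H}\in\Der^{[k_-,k_+]}(X)$, pick a lift $\tilde{\mathcal{H}}:=\tau^{\geq k_- -1}\lpi\mathcal{H}\in\Der^{\bd}(Y)$ with $\rpi\tilde{\mathcal{H}}\simeq\mathcal{H}$ (Corollary~2.6, Lemma~2.5). Decompose $\tilde{\mathcal{H}}$ along the semiorthogonal decomposition of $\Der^{\bd}(Y)$: there is a filtration with graded pieces $\tilde{\alpha}_i\tilde{\mathcal{H}}\in\tilde{\mathcal{A}}_i$. Applying $\rpi$, which is exact (triangulated), produces a filtration of $\mathcal{H}$ with graded pieces $\rpi\tilde{\alpha}_i\tilde{\mathcal{H}}\in\mathcal{A}_i$; since $\mathcal{A}_i$ is triangulated (Proposition~5.2), this exhibits $\mathcal{H}\in\langle\mathcal{A}_1,\ldots,\mathcal{A}_n\rangle$. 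Combined with semiorthogonality, this is precisely a semiorthogonal decomposition. For the identification of the projection functors, one checks that $\alpha_i:={\rpi}\circ\tilde{\alpha}_i\circ\tau^{\geq k_- -1}\circ\lpi$ is a well-defined functor — independent of the choice of $[k_-,k_+]$ and functorial in morphisms, which follows from Lemma~2.3 applied to morphisms together with the fact (Lemma~5.1 plus the orthogonality computation above) that the null-category error terms introduced by different truncations are killed after applying the relevant Hom's — and that the resulting filtration of an object coincides with the canonical filtration attached to $\langle\mathcal{A}_1,\ldots,\mathcal{A}_n\rangle$, by uniqueness of semiorthogonal filtrations.

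I expect the main obstacle to be the well-definedness and functoriality of $\alpha_i$, i.e.\ showing that the composite ${\rpi}\circ\tilde{\alpha}_i\circ\tau^{\geq k_- -1}\circ\lpi$ does not depend on the auxiliary truncation bound and is compatible with composition of morphisms. This is where the subtleties of Remark~4.4 enter: one cannot work in $\Der^-(Y)$ freely, and must instead argue that the discrepancies between $\tau^{\geq l}\lpi(-)$ for different $l$ live in $\tilde{\mathcal{A}}_{>i}\cap\ker^{\bd}\rpi$, so that they vanish after $\rpi$ and after projecting to $\tilde{\mathcal{A}}_i$ modulo the orthogonality relations — essentially a careful bookkeeping with Lemma~5.1 and the homotopy-limit argument. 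Everything else (adjunctions, the triangle $\lpi\rpi\mathcal{F}\to\mathcal{F}\to\mathcal{C}$, exactness of $\rpi$) is routine.
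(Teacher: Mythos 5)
Your proposal follows essentially the same route as the paper: semiorthogonality via the adjunction $\rHom_X(\rpi\mathcal{F},\rpi\mathcal{G})\simeq\rHom_Y(\lpi\rpi\mathcal{F},\mathcal{G})$ together with Lemma~5.1, and generation by pushing forward the semiorthogonal filtration of $\tau^{\geq k_- -1}\lpi\mathcal{F}$ along the triangulated functor $\rpi$, with the projections then identified (by uniqueness of semiorthogonal filtrations) with ${\rpi}\circ\tilde{\alpha}_i\circ\tau^{\geq k_- -1}\circ\lpi$.

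The one step where you diverge, and where your version is shaky, is in killing the unbounded-below tail of $\lpi\rpi\mathcal{F}$. You propose to ``pass to the homotopy limit over $l$'' of the vanishings $\rHom_Y(\tau^{\geq l}\lpi\rpi\mathcal{F},\mathcal{G})\simeq 0$; but $\rHom_Y(-,\mathcal{G})$ does not commute with homotopy limits in its \emph{first} argument (the holim used inside the proof of Lemma~5.1 sits in the second argument, where the commutation is legitimate). The paper instead splits $\lpi\mathcal{F}$ by the triangle $\tau^{\leq k_- -2}\lpi\mathcal{F}\to\lpi\mathcal{F}\to\tau^{\geq k_- -1}\lpi\mathcal{F}$, disposes of the upper truncation by semiorthogonality, and handles the lower truncation with the spectral sequence $E_2^{p,q}=\Ext^p_Y(\Ho^{-q}\lpi\mathcal{F},\mathcal{G})\Rightarrow\Ext^{p+q}_Y(\tau^{\leq k_- -2}\lpi\mathcal{F},\mathcal{G})$, which converges because $Y$ is smooth (so $\Ext^p$ vanishes for $p>\dim Y$ and only finitely many $(p,q)$ contribute to each total degree), each $E_2$-term vanishing since every $\Ho^{-q}\lpi\mathcal{F}$ lies in $\langle\tilde{\mathcal{A}}_i,\tilde{\mathcal{A}}_{i+1}\cap\ker^{\bd}\rpi,\ldots,\tilde{\mathcal{A}}_n\cap\ker^{\bd}\rpi\rangle$. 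Replacing your holim step by this finite-cohomological-dimension argument closes the gap; the rest of your outline matches the paper's proof.
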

 \begin{proof}
    First, we prove the semiorthogonality. Take $\mathcal{F} \in \rpi (\tilde{\mathcal{A}}_i \cap \Der ^{[k_-,k_+]} (Y))$ and $\mathcal{G} \in \tilde{\mathcal{A}}_j$ for each $i > j$ and $k_- \leq k_+$. Then, by Lemma 5.1, for all $l \leq k_- - 1$, \[
     \tau ^{\geq l} \lpi \mathcal{F} \in \langle \tilde{\mathcal{A}}_i,\tilde{\mathcal{A}}_{i+1} \cap \ker ^{\bd} \rpi,\ldots,\tilde{\mathcal{A}}_n \cap \ker ^{\bd} \rpi \rangle.
    \] By a distinguished triangle \[
     \xymatrix@C=10pt{
      \Ho ^l \lpi \mathcal{F} \ar[r] & \tau ^{\geq l} \lpi \mathcal{F} \ar[r] & \tau ^{\geq l + 1} \lpi \mathcal{F} \ar@{.>}[r] &,
     }
    \] we obtain \[
     \Ho ^l \lpi \mathcal{F} \in \langle \tilde{\mathcal{A}}_i,\tilde{\mathcal{A}}_{i+1} \cap \ker ^{\bd} \rpi,\ldots,\tilde{\mathcal{A}}_n \cap \ker ^{\bd} \rpi \rangle.
    \] Therefore, by semiorthogonality, \[
     \rHom _Y (\tau ^{\geq k_- - 1} \lpi \mathcal{F},\mathcal{G}) \simeq 0,
    \] and \[
     \rHom _Y (\Ho ^l \lpi \mathcal{F},\mathcal{G}) \simeq 0.
     \] Hence, By considering a spectral sequence \[
      E_2 ^{p,q} = \Ext _Y ^p (\Ho ^{-q} \lpi \mathcal{F},\mathcal{G}) \Rightarrow \Ext _Y ^{p + q} (\tau ^{\leq k_- - 2} \lpi \mathcal{F},\mathcal{G}),
     \] we obtain that \[
      \rHom _Y (\tau ^{\leq k_- - 2} \lpi \mathcal{F},\mathcal{G}) \simeq 0.
     \] Thus, \[
     \rHom _X (\mathcal{F},\rpi \mathcal{G}) \simeq \rHom _Y (\lpi \mathcal{F},\mathcal{G}) \simeq 0.
    \]
    \par Next, for each $\mathcal{F} \in \Der ^{\bd} (X)$, take the decomposition of $\tilde{\mathcal{F}} := \tau ^{\geq k_- - 1} \lpi \mathcal{F}$: \[
     \xymatrix@!C=10pt{
      0 \ar[rr] & & \tilde{\mathcal{F}}_n \ar[r] \ar[ld] & \cdots \ar[r] & \tilde{\mathcal{F}}_3 \ar[rr] & & \tilde{\mathcal{F}}_2 \ar[rr] \ar[ld] & & \tilde{\mathcal{F}}. \ar[ld] \\
      & \tilde{\alpha}_n \tilde{\mathcal{F}} \ar@{.>}[lu] & & & & \tilde{\alpha}_2 \tilde{\mathcal{F}} \ar@{.>}[lu] & & \tilde{\alpha}_1 \tilde{\mathcal{F}} \ar@{.>}[lu]
     }
    \] By applying $\rpi$, we obtain the following diagram: \[
     \xymatrix@!C=10pt{
      0 \ar[rr] & & \rpi \tilde{\mathcal{F}}_n \ar[r] \ar[ld] & \cdots \ar[r] & \rpi \tilde{\mathcal{F}}_3 \ar[rr] & & \rpi \tilde{\mathcal{F}}_2 \ar[rr] \ar[ld] & & \mathcal{F}. \ar[ld] \\
      & \rpi \tilde{\alpha}_n \tilde{\mathcal{F}} \ar@{.>}[lu] & & & & \rpi \tilde{\alpha}_2 \tilde{\mathcal{F}} \ar@{.>}[lu] & & \rpi \tilde{\alpha}_1 \tilde{\mathcal{F}} \ar@{.>}[lu]
     }
    \] Since $\rpi \tilde{\alpha}_i \tilde{\mathcal{F}} \in \mathcal{A}^- _i$ for all $i \in \{ 1,\ldots,n \}$, the previous diagram proves the required semiorthogonal decomposition is well-defined. Moreover, the projections $\alpha _i \simeq {\rpi} \circ \tilde{\alpha}_i \circ \tau^{\geq k_- - 1} \circ \lpi$.
 \end{proof}
 \begin{exe}
  Let $X = (xy-zw = 0) \subseteq \mathbb{P}^4$ be a conifold. By blowing-up along a closed subscheme $Z = (x = z = 0) \subseteq X$, we obtain a small resolution $\pi \colon Y := \bl _Z X \to X$. We can check that $Y \simeq \mathbb{P} _{\mathbb{P}^1} (\mathcal{O} (-1) \oplus \mathcal{O} (-1) \oplus \mathcal{O})$, and the exceptional curve $C$ of $\pi$ corresponds to $\mathbb{P} _{\mathbb{P}^1} (0 \oplus 0 \oplus \mathcal{O}) \subseteq \mathbb{P} _{\mathbb{P}^1} (\mathcal{O} (-1) \oplus \mathcal{O} (-1) \oplus \mathcal{O})$. By the projection bundle formula, we have a semiorthogonal decomposition \[
   \Der ^{\bd} (Y) = \langle \mathcal{O} _Y (-2E),\mathcal{O} _Y (-2E + H),\mathcal{O} _Y (-E - H),\mathcal{O} _Y (-E),\mathcal{O} _Y (-H),\mathcal{O} _Y \rangle,
  \] where $E$ is the divisor corresponding to $\mathbb{P} _{\mathbb{P}^1} (\mathcal{O} (-1) \oplus \mathcal{O} (-1) \oplus 0)$, and $H$ is the pullback of the point class from $\mathbb{P}^1$. By mutation, we obtain that \[
   \Der ^{\bd} (Y) = \langle \mathcal{O} _Y (-2E),\mathcal{L},\mathcal{O} _Y (-2E + H),\mathcal{O} _Y (-E),\mathcal{O} _Y (-H),\mathcal{O} _Y \rangle,
  \] where the left mutation $\mathcal{L} := \lm _{\mathcal{O} (-2E + H)} \mathcal{O} (-E - H)$ is defined as the $(-1)$-shift of the mapping cone of the evaluation morphism \[
   \rHom _Y (\mathcal{O} (-2E + H),\mathcal{O} (-E - H)) \otimes _{\mathcal{C}} \mathcal{O} (-2E + H) \to \mathcal{O} (-E - H).
  \] Note that \[
   \lm _{\mathcal{O} (-2E + H)} \mathcal{O} (-E - H) \simeq \mathcal{E}[-1],
  \] where $\mathcal{E}$ is a vector bundle of rank $2$ which is obtained as the non-split extension \[
   \xymatrix@C=10pt{
    0 \ar[r] & \mathcal{O} (-E - H) \ar[r] & \mathcal{E} \ar[r] & \mathcal{O} (-2E + H) \ar[r] & 0.
   }
  \] Prove that \[
   \mathcal{O} _C (-1) \in \langle \mathcal{O} _Y (-2E + H),\mathcal{O} _Y (-E),\mathcal{O} _Y (-H) \rangle.
  \]
  \par Let $D_1,D_2 \in \cl (Y)$ be the divisors corresponding to surfaces $\mathbb{P} _{\mathbb{P}^1} (\mathcal{O} (-1) \oplus 0 \oplus \mathcal{O}),\mathbb{P} _{\mathbb{P}^1} (0 \oplus \mathcal{O} (-1) \oplus \mathcal{O})$ of $Y$, respectively. Then, the intersection of $D_1$ and $D_2$ is equal to $C$. Since $\cl (Y) \simeq \mathbb{Z} E \oplus \mathbb{Z} H$, the divisor $D_1$ is represented by the form $aE + bH$. Now, $C$ is a $(-1)$-curve of $D_1$. In fact, since resolved conifold $\tot _{\mathbb{P}^1} (\mathcal{O} (-1) \oplus \mathcal{O} (-1))$ is Calabi-Yau, $Y$ is Calabi-Yau in a neighborhood of $C$, that is, $K_Y \cdot C = 0$. Also, since the self intersection number of $C$ in $D_1$ is $-1$, and the genus of the rational curve $C$ is 0, $K_{D_1} \cdot C = -1$. Therefore,
  \begin{eqnarray*}
   -1 & = & K_{D_1} \cdot C \\
   & = & K_{D_1} | _C \\
   & = & ((K_Y + D_1) | _{D_1}) | _C \\
   & = & (K_Y + D_1) | _C \\
   & = & K_Y | _C + D_1 | _C \\
   & = & 0 + D_1 \cdot C \\
   & = & D_1 \cdot C.
  \end{eqnarray*}
  Moreover, we can easily check that $E \cdot C = 0$ and $H \cdot C = 1$. Hence, \[
   -1 = D_1 \cdot C = (aE + bH) \cdot C = b.
  \] Let $L$ be a line in the pullback of the point from $\mathbb{P}^1$ in $Y$. Then, $D_1 \cdot L = 1$, $E \cdot L = 1$, and $H \cdot L = 0$. Hence, \[
   1 = D_1 \cdot L = (aE + bH) \cdot L = a.
  \] That is, $D_1 = E - H$. Similarly, we can check that $D_2 = E - H$.
  \par Consider the Koszul resolution of $\mathcal{O}_C$ and twist by $\mathcal{O} (-H)$, we obtain a exact sequence \[
   \xymatrix@C=10pt{
    0 \ar[r] & \mathcal{O} (H - 2E) \ar[r] & \mathcal{O} (-E) ^{\oplus 2} \ar[r] & \mathcal{O} (-H) \ar[r] & \mathcal{O}_C (-1) \ar[r] & 0.
   }
  \] This means that \[
   \mathcal{O} _C (-1) \in \langle \mathcal{O} _Y (-2E + H),\mathcal{O} _Y (-E),\mathcal{O} _Y (-H) \rangle.
  \] Thus, by taking that
  \begin{eqnarray*}
   \tilde{\mathcal{A}}_1 & = & \langle \mathcal{O} (-2E) \rangle, \\
   \tilde{\mathcal{A}}_2 & = & \langle \mathcal{E} \rangle, \\
   \tilde{\mathcal{A}}_3 & = & \langle \mathcal{O} (-2E + H),\mathcal{O} (-2E),\mathcal{O} (-H) \rangle, \\
   \tilde{\mathcal{A}}_4 & = & \langle \mathcal{O} \rangle,
  \end{eqnarray*}
  we obtain a semiorthogonal decomposition \[
   \Der ^{\bd} (Y) = \langle \tilde{\mathcal{A}}_1,\tilde{\mathcal{A}}_2,\tilde{\mathcal{A}}_3,\tilde{\mathcal{A}}_4 \rangle,
  \] Therefore, by Theorem 5.3, we obtain a nontrivial semiorthogonal decomposition \[
   \Der ^{\bd} (X) = \langle \mathcal{A}_1,\mathcal{A}_2,\mathcal{A}_3,\mathcal{A}_4 \rangle,
  \] such that
  \begin{eqnarray*}
   \mathcal{A}_1 & = & \langle \rpi \mathcal{O} (-2E) \rangle, \\
   \mathcal{A}_2 & = & \langle \rpi \mathcal{E} \rangle, \\
   \mathcal{A}_3 & = & \langle \rpi \mathcal{O} (-2E + H),\rpi \mathcal{O} (-E),\rpi \mathcal{O} (-H) \rangle, \\
   \mathcal{A}_4 & = & \langle \mathcal{O} _X \rangle.
  \end{eqnarray*}
 \end{exe}
 \bibliography{ref}
 \bibliographystyle{amsplain}
\end{document}